\newtheorem{teo}{Theorem}[section]
\newtheorem{rem}{Remark}[section]
\newtheorem{prop}{Proposition}[section]
\newtheorem{cor}{Corollary}[section]
\newtheorem{es}{Example}[section]
\newtheorem{defin}{Definition}[section]
\newcommand{\Z}{{\mathbb{Z}}}
\newcommand{\C}{{\mathbb{C}}}
\newcommand{\R}{{\mathbb{R}}}
\newcommand{\N}{{\mathbb{N}}}
\newcommand{\G}{{\mathcal{G}}}
\newcommand{\MS}{{\mathcal{S}}}
\newcommand{\Proj}{{\mathbb{P}}}
\newcommand{\A}{{\mathcal{A}}}
\newcommand{\Pc}{{\mathcal{P}}}
\newcommand{\K}{{\mathbb{K}}}
\newcommand{\Sc}{{\mathcal{S}}}
\newcommand{\Cc}{{\mathcal{C}}}
\newcommand{\Bc}{{\mathcal{B}}}
\newcommand{\Fc}{{\mathcal{F}}}
\newcommand{\Hc}{{\mathcal{H}}}
\newcommand{\emme}{{\mathcal M}}
\newcommand{\supp}{{\text{supp}}}
\title{Exponential formulas for models of complex reflection groups}
\author{Giovanni Gaiffi\footnote{ Dipartimento di Matematica, Universit\`a di Pisa, 
Largo Bruno Pontecorvo, 5, 56127, Pisa, \texttt{gaiffi@dm.unipi.it}}}
\date{\today}
\begin{document}
\maketitle
\begin{abstract}
In this paper we  find  exponential formulas for the Betti numbers of the  De  Concini-Procesi  minimal  wonderful models \(Y_{G(r,p,n)}\) associated to the complex reflection groups \(G(r,p,n)\). Our formulas are  different from the ones  already known in the literature:  they are obtained by  a new combinatorial encoding  of  the elements of a basis of the cohomology by means of set partitions with weights and exponents.

We also point out that a similar   combinatorial encoding  can be used to describe the faces of the real spherical wonderful models of type \(A_{n-1}\)(\(=G(1,1,n)\)), \(B_n\) (\(=G(2,1,n)\)) and \(D_n\)(\(=G(2,2,n)\)). This  provides exponential formulas for the  \(f\)-vectors of  the associated nestohedra: the Stasheff's associahedra (in this case closed formulas are well known)  and  the graph associahedra of type  \(D_n\). 
\end{abstract}

\section{Introduction}
%In this paper we will prove some  exponential formulas for the Betti numbers of    the De Concini-Procesi complex minimal wonderful models \(Y_{G(r,p,n)}\) associated to the complex reflection groups  \(G(r,p,n)\).  The combinatorial encoding that we use for the computation of the Betti numbers can be also applied to the problem of counting the faces of  the graph associahedra  associated to the real reflection groups. In particular, we find exponential formulas for the \(f\) vectors of the Stasheff associahedra (in this case closed formulas are well known)  and for  the graph associahedra of type  \(D_n\). 

Let us start by fixing some  notations.
 First we recall that the finite irreducible complex reflection groups, according to the Shephard-Todd classification (see \cite{shephardtodd}), are the groups \(G(r,p,n)\), with \(r,p,n\in \Z^+\) and \(p|r\), plus 34 exceptional groups.

Let \(C(r)\) be the cyclic group of  order \(r\) generated by a primitive \(r\)-th root of unity \(\zeta\). The group \(G(r,1,n)\),  the full monomial group, is the wreath product of \(C(r)\) and  the symmetric group \(S_n\). It can also be described as the group generated by all the complex reflections in \(GL(\C^n)\) whose reflecting hyperplanes are the hyperplanes with equations
\(x_i=\zeta^{\alpha}x_j\), where \(\alpha=0,...,r-1\), and  \(x_i=0\).

Its elements are   all the linear transformations \(g(\sigma, \epsilon)\: : \:  \C^n\rightarrow \C^n\) defined on the standard basis by
\[g(\sigma,\epsilon)e_i=\epsilon(i)e_{\sigma(i)}\]
where \(\sigma\in S_n\) and \(\epsilon\) ranges among the functions from \(\{1,...,n\}\) to \(C(r)\).

The group \(G(r,p,n)\) is the subgroup of \(G(r,1,n)\) consisting of all the  \(g(\sigma, \epsilon)\) such that the product \(\epsilon(1)\epsilon(2)\cdots \epsilon(n)\) is a power of \(\zeta^p\).
If \(p<r\) the sets of reflecting hyperplanes of \(G(r,p,n)\) and \(G(r,1,n)\) coincide, and  
their intersection lattice is the Dowling lattice \(Q_n(\Z^r)\) (see \cite{dowling}). The set of reflecting hyperplanes of \(G(r,r,n)\) is obtained from that of \(G(r,1,n)\) by deleting the coordinate hyperplanes \(x_i=0\).

As important  examples, we observe that \(G(1,1,n)=S_n\) is the Weyl group of type \(A_{n-1}\), while \(G(2,1,n)\) and \(G(2,2,n)\) are respectively the Weyl groups of type \(B_n\) and \(D_n\).

\subsection{The interest of the models \(Y_{G(r,p,n)}\)}

Wonderful models have been constructed by De Concini-Procesi in their seminal papers \cite{DCP2} and \cite{DCP1}. They play a relevant role in several   fields: subspace and toric  arrangements (see  \cite{DCP3}, \cite{feichtneryuz}),   
configuration spaces, box splines and  index theory (see the exposition in  \cite{DCP4}),  tropical geometry (see for instance \cite{feichtnersturmfels} and  the survey \cite{denham}) and discrete  geometry    (see \cite{feichtner} for further references).  
We will recall  in Sections \ref{subsecbuilding} and \ref{subsecwonderful} the construction of these models, including the definitions of nested sets and building sets,  and their main properties. The importance of the models associated with reflection groups, i.e. with the hyperplane arrangements  given by their reflecting hyperplanes,  was at first  pointed out  by the example of type \(A\): the minimal projective De Concini-Procesi model  of type \(A_{n-1}\) is isomorphic to the  moduli space \({\overline M_{0,n+1}}\) of (\(n+1\))-pointed stable  curves of genus 0.
This isomorphism  carries on the cohomology of the models of  type \(A_{n-1}\) an `hidden' extended  action of \(S_{n+1}\) that has been  studied by several authors (see for instance \cite{getzler},  \cite{rains2009}, \cite{etihenkamrai}).

Also the other models \(Y_{G(r,p,n)}\) appeared in the literature in several contexts.
They  are crucial objects in representation theory, since they provide natural geometric representations of \(G(r,p,n)\). They were studied from this point of view    by Henderson in  \cite{hendersonwreath}, where    recursive character formulas  for the action of \(G(r,1,n)\) and \(G(r,r,n)\) on their cohomology were described, as well as their  specializations that   give recursive formulas for the Betti numbers. We note here that the model \(Y_{G(r,1,n)}\)  is equal to \(Y_{G(r,p,n)}\)  if  \(p<r\), since the underlying reflection arrangement is the same.
We recall that recursive formulas for the Betti numbers in the cases \(A_n\), \(B_n\) and \(D_n\) have been obtained  also in \cite{YuzBasi} and   \cite{GaiffiBlowups} (for the  \(A_n\) case these formulas have been found in several other papers devoted to  the moduli spaces approach, see for instance \cite{Manin}).

We remark that in  the case of a finite real  reflection group \(G\),  one  can construct  a complex minimal model \(Y_G\) and  also a minimal real  compact  model \(\overline {Y}_G\):  formulas for the action on the cohomology of \(\overline {Y}_G\)  in the \(A_n\) case  appear in \cite{rains2009} while in \cite{hendersonrains}  the cases of the other finite Coxeter groups  are dealt with.

The combinatorial and discrete geometric interest of  the real models \(\overline {Y}_G\)  comes from the observation that they can  be obtained by glueing some nestohedra: for instance, the models   \(\overline {Y}_{G(1,1,n)}\)  and \(\overline {Y}_{G(2,1,n)}\)  are obtained by glueing Stasheff's associahedra, while   the models \(\overline {Y}_{G(2,2,n)}\) are obtained by glueing graph associahedra of type \(D_n\) (in the sense of Carr and Devadoss, see \cite{carrdevadoss}). There are also  non minimal  De Concini-Procesi models (see \cite{GaiffiServenti2} for a classification), whose construction involves the glueing of permutohedra and other  nestohedra (see \cite{gaiffipermutonestoedra}).

%Furthermore,  we recall that the  real and complex models associated with reflection groups   play an important role also in  the theory of box splines (see the exposition in  \cite{DCP3}) and in tropical geometry, . 

Finally we would like to mention  that the  minimal complex model \(Y_G\), when \(G\) is an irreducible  finite complex reflection group, plays a  role in the theory of braid groups: for instance, the elements in  the center of the pure braid group $PB_G$ (resp. the braid group \(B_G\)) associated to $G$  are easily described in terms of the geometry of  \(Y_G\) (resp. \(Y_G/G\)),  as well as the elements in the center of the parabolic subgroups of \(PB_G\) (resp. \(B_G\), see \cite{callegarogaiffilochak}).

\subsection{A combinatorial approach}
In \cite{callegarogaiffi3}  a new  exponential  (non recursive)  formula for the Betti numbers of the models \(Y_{G(1,1,n)}\) has been found,  using the following  combinatorial approach.
In \cite{YuzBasi} (see also \cite{GaiffiBlowups}) a monomial  basis of  \(H^*(Y_{G(1,1,n)})\) was  described;  the elements of this basis can be represented by graphs, that are some oriented rooted trees  on \(n\) leaves, with exponents attached to the internal vertices. 
Now let us focus on the trees that have \(k\) internal vertices;  in \cite{gaifficayleynumbers}  a bijection between these trees and the partitions of \(\{1,...,n+k-1\}\) into \(k\) parts of cardinality \(\geq 2\) has been described (this  is in fact a variant of a bijection shown in \cite{PeErdos}).
It turns out that, using this bijection, a new representation of the monomials of the basis  of  \(H^*(Y_{G(1,1,n)})\) is provided by {\em partitions with exponents}. The generating function for these  partitions  is expressed by an exponential series (see Theorem \ref{teo:formulapoicareminimale} in Section \ref{classicbraid}, where we  recall  the results on this type A   case).

In this paper we  extend  to  all the groups  \(G(r,p,n)\) the above described combinatorial approach. Here  the combinatorial structure (described in Section \ref{sec:extension})  is richer: vertices of two types appear ({\em strong} and {\em weak} vertices) as well as weights attached to the vertices and the leaves (in addition to exponents attached to the internal vertices, as in the \(A_n\) case).
 Even if the  combinatorial picture is more complicated, we obtain also in this more general case  exponential formulas for the generating functions of the Betti numbers. These formulas are the content of Theorems \ref{teog(r1n)} and \ref{teog(rrn)} in Section \ref{section:seriesgr1n}.

In the  last two sections of the paper  we show  an application of the same principles (the encoding of the combinatorics of nested sets by weighted partitions) to the counting of  the faces of some polytopes associated to the real reflection groups \(G(1,1,n)\), \(G(2,1,n)\) and \(G(2,2,n)\).

We start by recalling, in Section \ref{sec:spherical}, the construction of the minimal spherical model $CY_{G}$ associated with a real reflection group \(G\). This is a smooth manifold with corners and it is diffeomorphic to a 
 disjoint union of polytopes that belong to the family of nestohedra. In   \cite{gaiffipermutonestoedra} a linear realization of  $CY_{G}$ is provided: the polytopes involved lie inside  the chambers of the reflection arrangement;  in every chamber we find a copy of  the graph associahedron  \(P_G\) of type \(G\) (i.e. the graph polytope defined in  \cite{devadoss} and \cite{carrdevadoss} associated with the Dynkin diagram  of type \(G\)).
 We show in Section \ref{Eulercomputation} that the faces of the polytopes  appearing in $CY_{G}$ can be indexed by weighted  {\em internally ordered} partitions, i.e. the parts of the partitions are ordered sets. This gives rise to exponential formulas for the generating series of the \(f\)-vectors of the polytopes \(P_G\) when \(G=G(1,1,n), G(2,1,n), G(2,2,n)\).
 
 In the first two cases the components of the \(f\) vectors are the well known Kirkman-Cayley numbers (a  closed  formula for these numbers dates back to  Cayley's paper   \cite{Cayley}), since the associated polytopes are Stasheff's associahedra.  In the \(G(2,2,n)\) case (see Theorem \ref{teo:politopodn}) our  formulas  generate   the \(f\)-vectors of the graph polytopes of type \(D_n\).
 As a final remark, we notice that specializing these generating series  we can obtain   formulas for the generating series of  the Euler characteristic of the corresponding real compact  De Concini-Procesi models, that can be compared with the closed  formulas described in \cite{hendersonrains}.

\section{Models}
In this section we will recall the basic facts about De Concini-Procesi models of subspace arrangements, introduced in the seminal papers \cite{DCP2}, \cite{DCP1}.

\subsection{Building sets and nested sets}
\label{subsecbuilding}
Let $V$ be a finite dimensional vector space over a field \(\K\)
and let  $\G$ be a finite set of subspaces of the dual space $V^*$.  We denote  by $\Cc_\G$ its closure under the sum.

\begin{defin}
 Given a subspace $U\in\Cc_\G$, a \textbf{decomposition of} $U$ in $\mathbf{\Cc_\G}$ is a collection
$\{U_1,\cdots,U_k\}$ ($k>1$) of non zero subspaces in $\Cc_\G$ such that
\begin{enumerate}
 \item $U=U_1\oplus\cdots\oplus U_k$
 \item for every subspace $A\subset U$, $A\in\Cc_\G$, we have $A\cap U_1,\cdots,A\cap U_k \in \Cc_\G$ and
$A=\left(A\cap U_1\right)\oplus\cdots\oplus \left(A\cap U_k\right)$.
\end{enumerate}
\end{defin}
\begin{defin}
\label{def:irreducible}
 A subspace $F\in\Cc_\G$ which does not admit a decomposition  is called \textbf{irreducible} and the set of
irreducible subspaces is denoted by $\mathbf{\Fc_\G}$.
\end{defin}
One can prove  that 
every subspace $U\in\Cc_\G$ has a unique decomposition into irreducible subspaces.
The set $\mathbf{\Fc_\G}$ of the irreducible spaces and the set $\mathbf{\Cc_\G}$ are  {\em building sets} in the sense of the following definition:
\begin{defin}\label{building}
 A collection $\G$ of subspaces of $V^*$ is called \textbf{building} if every element $C\in\Cc_\G$ is the direct sum
$G_1\oplus\cdots\oplus G_k$ of the set of maximal elements $G_1,\cdots,G_k$ of $\G$ contained in $C$.
\end{defin}

%As first examples of building sets one can consider the  set of irreducible subspaces  of a given family of subspaces of $V^*$, or any    set  of subspaces of $V^*$ which is closed under the sum.

\begin{defin}(see \cite{DCP1}, Section 2.4) \label{Gnested}
 Let $\G$ be a building set of subspaces of $V^*$. A subset $\Sc\subset \G$ is called $\mathbf{\G}$\textbf{-nested} if and only if 
 for every  subset $\{A_1,\cdots,A_k\}$ (\(k\geq 2\)) of pairwise non comparable elements of $\Sc$  the subspace   $A=A_1+\cdots +
A_k$ does not belong to  $\G$.

\end{defin}

After De Concini and Procesi's papers \cite{DCP2} and  \cite{DCP1},  building sets and nested sets turned out to play a relevant role in combinatorics.  For instance, in \cite{feichtnerkozlovincidence}  building sets and nested sets  were defined in the more general context of meet-semilattices and they also appeared in connection with special polytopes, called nestohedra (see   \cite{postnikov},
\cite{postnikoreinewilli}, \cite{zelevinski}, \cite{petric2}, \cite{gaiffipermutonestoedra}). In Sections \ref{sec:spherical} and \ref{Eulercomputation} we will deal with some of these polytopes.

\subsection{Wonderful models}
\label{subsecwonderful}

Let us take  \(\K=\C\)  as the base field and consider a finite subspace arrangement in the complex vector space $V$. We  describe this arrangement by the dual arrangement \(\G\) in \(V^*\) (for every $A\in\G$, we   denote by  $A^\perp$ its
annihilator in $V$). The complement in \(V\) of the arrangement \(\{A^\perp \: | \: A\in \G\}\) will be denoted by \(\emme(\G)\).\\
For every $A\in\G$ we have a rational map defined outside of \(A^{\perp}\):
$$\pi_A:V\longrightarrow V/A^\perp \longrightarrow \Proj \left( V/A^\perp \right).$$

We then consider the embedding 
$$\phi_\G:\emme(\G)\longrightarrow V\times\prod_{A\in\G}\Proj\left( V/A^\perp \right)$$
given by the  inclusion on the first component and by the maps \(\pi_A\) on the other components. 

\begin{defin}
\label{definizionemodelli}
The De Concini-Procesi model
\(Y_{\G}\) associated to  $\G$ is the closure of $\phi_\G \left(\emme(\G) \right)$ in
$V\times\prod_{A\in\G}\Proj\left(V/A^\perp\right)$.
\end{defin}

These  {\em wonderful models}  are  particularly interesting when the arrangement $\G$ is building: they turn out to be 
smooth varieties and the complement of \(\emme(\G)\) in \(Y_\G\) is a divisor with normal crossings. The irreducible components of this divisor are  indexed by \(\G\): 
if   $p$ is the 
projection of $
Y_{\G}$ onto
	the first component \(  V\),  then for every \(G\in \G\) we denote by  ${ \mathcal  D}_G$ the unique 	 irreducible  component 		 such that
	$ p({ \mathcal  D}_G)=G^\perp$.  
	
	A complete characterization of the boundary is then provided by the 
		observation that, if we consider  a collection \( 
	{ \mathcal T} \) of subspaces in \( \G \),  then  \[ 
	{ \mathcal D}_{{ \mathcal T}}= \bigcap_{A\in { \mathcal 
	T}}{\mathcal D}_{A}\] is non empty if and only if \( {\mathcal T} \) 
	is \(\G\)-nested, and in this case \(	{\mathcal D}_{{\mathcal T}}\)  is a smooth irreducible subvariety obtained as a normal crossing intersection.

A presentation of the   integer cohomology rings of the models \(Y_\G\) was provided  in  \cite{DCP1}. They are torsion free, and in  \cite{YuzBasi}  Yuzvinski explicitly described  $\Z$-bases (see also  \cite{GaiffiBlowups} that  extends this description giving bases for   the cohomology  of  the    components of the boundary).  We  briefly  recall these results.

Let $\G$ be a building set of subspaces of $V^*$.  If $\Hc\subset
\G$ and $B\in\G$ is such that $A\subsetneq B$ for each $A\in\Hc$,  one  defines
$$d_{\Hc,B}:=\dim B - \dim \left(\sum_{A\in\Hc} A\right).$$
In the polynomial ring $\Z[c_A]_{A\in\G}$, we consider the ideal \(I\) generated by the polynomials  
$$P_{\Hc,B}:=\prod_{A\in\Hc}c_A\left(\sum_{C\supset B}c_C\right)^{d_{\Hc,B}}$$
 as $\Hc$ and $B$ vary.
\begin{teo}(see \cite{DCP1}, Section 5.2).\\
There is a surjective ring homomorphism
$$\phi \: : \: \Z[c_A]_{A\in\G}\longrightarrow H^*(Y_\G,\Z)$$
whose  kernel is $I$ and such that  \(\phi(c_A)\in H^2(Y_\G,\Z)\) is the Chern class of the divisor \({ \mathcal  D}_A\).  
\end{teo}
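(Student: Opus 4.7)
The plan is to realize $Y_\G$ as an iterated blow-up and to read off the cohomology via the classical blow-up formula. Concretely, one orders the irreducible elements of $\G$ in a sequence of non-decreasing dimension and shows that $Y_\G$ is obtained from $V$ by successively blowing up the proper transform of each subspace $A^\perp$. Under this description the divisor $\mathcal{D}_A$ is the proper transform of the exceptional divisor introduced at the $A$-step.

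First, I would establish surjectivity of $\phi$. At each blow-up along a smooth center $Z \subset X$ of codimension $c$, the blow-up formula identifies $H^*(\widetilde X,\Z)$ as $H^*(X,\Z)$ together with one new generator — the class $e$ of the exceptional divisor — whose powers $e, e^2, \dots, e^{c-1}$ span the new part as an $H^*(Z,\Z)$-module. Iterating, $H^*(Y_\G,\Z)$ is generated as a $\Z$-algebra by the classes $c_A$ of the $\mathcal{D}_A$, so the ring map sending the formal variable $c_A$ to the Chern class of $\mathcal{D}_A$ is surjective.

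Second, I would show the relations $P_{\Hc,B}$ lie in $\ker\phi$. They encode two geometric facts simultaneously. If $\Hc \cup \{B\}$ fails to be $\G$-nested, then $\bigcap_{A \in \Hc}\mathcal{D}_A \cap \mathcal{D}_B = \emptyset$, and a localization at $B$ shows the whole expression vanishes for this combinatorial reason. In the nested case, at the step in the blow-up where one introduces $\mathcal{D}_B$, the intersection with the previously built divisors $\mathcal{D}_A$ for $A \in \Hc$ carries a projective-bundle structure of rank $d_{\Hc,B}-1$; on this bundle the class $\sum_{C \supseteq B} c_C$ restricts to the tautological hyperplane class, which satisfies exactly the Grothendieck relation of degree $d_{\Hc,B}$. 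Pushing forward gives the required identity.

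Finally, and this is where the main effort lies, one must show that $\ker\phi = I$. My strategy would be to compare ranks: exhibit an explicit $\Z$-spanning set for $\Z[c_A]/I$ — the \emph{admissible monomials} $\prod c_A^{m_A}$ indexed by $\G$-nested sets with exponents bounded by the $d_{\Hc,B}$'s, in the style of \cite{YuzBasi} — and check independently, again via the iterated blow-up, that its cardinality matches $\mathrm{rk}\, H^*(Y_\G,\Z)$. The combinatorial core, and the main obstacle, is to produce a rewriting procedure that reduces every monomial modulo $I$ to a $\Z$-linear combination of admissible monomials; one uses the $P_{\Hc,B}$ to systematically lower a suitable ``non-admissibility'' measure and must verify termination and well-definedness of the resulting normal form. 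Once both sides are shown to be free of the same rank, surjectivity of $\phi$ forces $\ker\phi = I$.
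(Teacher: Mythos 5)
This theorem is not proved in the paper at all: it is quoted from De Concini--Procesi (\cite{DCP1}, Section 5.2), so there is no internal proof to compare your attempt against; I can only measure it against the argument in the cited source (supplemented by \cite{YuzBasi} for the monomial basis). Your outline does reconstruct that route faithfully in its architecture: iterated blow-up realization of $Y_\G$, surjectivity via the blow-up formula, the relations $P_{\Hc,B}$ explained by empty intersections in the non-nested case and by the Grothendieck relation on a projectivized normal bundle of rank $d_{\Hc,B}$ in the nested case, and a spanning-plus-rank comparison with admissible monomials to identify the kernel. Two points need repair. First, your blow-up order is reversed: the centers $A^\perp$ must be blown up smallest first, i.e.\ the elements of $\G$ processed in \emph{non-increasing} dimension (for $A_3$ one blows up the origin before the four lines through it; doing it in your order destroys the proper transform of the smaller center before its turn, and the result is not $Y_\G$). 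Second, the surjectivity step is incomplete as stated: the blow-up formula presents the new part of $H^*(\widetilde X,\Z)$ as $e^i\cdot\pi^*\alpha$ with $\alpha\in H^*(Z,\Z)$, so to conclude that the classes $c_A$ alone generate you need the inductive statement that restriction $H^*(X,\Z)\to H^*(Z,\Z)$ is surjective for each center $Z$ (the centers being wonderful models of smaller arrangements whose cohomology is generated by restrictions of the previously introduced divisor classes); this is true and is proved in \cite{DCP1}, but it must be carried along in the induction rather than assumed. With those corrections, the remaining and genuinely hard ingredient is the one you correctly isolate: the straightening procedure showing that the admissible monomials span $\Z[c_A]_{A\in\G}/I$, which is the content of \cite{YuzBasi} and \cite{GaiffiBlowups} and cannot be waved through.
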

%When $\G$ is building $H^*(Y_\G,\Z)$ is then a torsion free $\Z$-module; we are now going to recall the $\Z$-basis
%described in \cite{GaiffiBlowups}.
\begin{defin}
\label{def:support}
 Let $\G$ be a building set of subspaces of $V^*$. A function
$f:\G\longrightarrow \N$
is $\mathbf{\G}$\textbf{-admissible} (or simply \textbf{admissible}) if $f=0$ or, if $f\neq 0$, the following two conditions hold:
\begin{itemize}
\item $\supp(f)$ is
$\G$-nested
\item for all $A\in\supp(f)$ one has
$f(A)< d_{\supp(f)_A,A}$
\end{itemize}
where $\supp(f)_A:=\{C\in\supp(f):C\subsetneq A\}$.

 A monomial $m_f=\prod_{A\in\G}c_A^{f(A)}\in\Z[c_A]_{A\in\G}$ is \textbf{admissible} if $f$ is admissible.
\end{defin}
\begin{teo}\label{base coomologia}(see  Section 3 of \cite{YuzBasi} and Section 2 of \cite{GaiffiBlowups})\\
 The set $\mathcal{B}_\G$ of all admissible monomials is    a $\Z$-basis of $H^*(Y_\G,\Z)$.
\end{teo}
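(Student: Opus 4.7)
The plan is to use the presentation of $H^*(Y_\G,\Z)$ as $\Z[c_A]_{A\in\G}/I$ from the previous theorem and prove separately that the admissible monomials span the quotient and that they are $\Z$-linearly independent.

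For the spanning part, I would set up a rewriting procedure: given any monomial $m_f=\prod_A c_A^{f(A)}$ that is not admissible, use one of the relations $P_{\Hc,B}$ to rewrite it modulo $I$ as a $\Z$-linear combination of monomials that are, in a suitable sense, closer to being admissible. There are two obstructions to admissibility, and I would handle them in order. If $\supp(f)$ fails to be $\G$-nested, there exist pairwise non-comparable $A_1,\dots,A_k\in\supp(f)$ with $B:=A_1+\cdots+A_k\in\G$; taking $\Hc=\{A_1,\dots,A_k\}$ the relation $P_{\Hc,B}=0$, after expanding $\left(\sum_{C\supset B}c_C\right)^{d_{\Hc,B}}$, lets one express $c_{A_1}\cdots c_{A_k}$ as a combination of monomials whose supports contain some $C\supsetneq B$; note in particular that the minimal element of $\Hc$ gets replaced by larger subspaces. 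If instead $\supp(f)$ is nested but some $f(A)\geq d_{\supp(f)_A,A}$, applying the relation with $\Hc=\supp(f)_A$ and $B=A$ again rewrites $c_A^{d_{\Hc,A}}\prod_{C\in\Hc}c_C$ in terms of monomials with some factor $c_D$ for $D\supsetneq A$. In both cases a well-chosen inductive quantity (for instance, the lexicographic vector of dimensions $\dim A$ occurring in the support, counted with multiplicity $f(A)$) strictly increases, so the procedure terminates.

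For linear independence, I would exploit that $Y_\G$ is constructed from $V$ by an explicit sequence of blowups along (proper transforms of) the subspaces $A^{\perp}$, performed in an order of increasing dimension of $A^\perp$ (equivalently, decreasing dimension of $A$). At each blowup step, the classical blowup formula identifies $H^*$ of the new variety with $H^*$ of the old one plus, for each new exceptional divisor $\mathcal D_A$, a free module over $H^*(\mathcal D_A)$ generated by classes $1,c_A,c_A^2,\dots,c_A^{k_A-1}$ where $k_A$ is the codimension in the ambient stage at the moment of the blowup. A straightforward check identifies this codimension with $d_{\supp(f)_A,A}$ when one restricts $c_A$ to the stratum $\mathcal D_\Tc$ corresponding to a nested set $\Tc$ containing $A$. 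By induction on the building set, the basis $\mathcal B_\G$ splits neatly along the blowup sequence: admissible monomials supported inside a nested set $\Tc$ restrict to a basis of $H^*(\mathcal D_\Tc)$, matching the iterated blowup decomposition. This gives independence and completes the proof.

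The main obstacle is the bookkeeping needed to make the spanning argument rigorous: one must check that the chosen inductive measure is actually strictly improved by every relation one uses, and that no branch of the rewriting introduces monomials worse with respect to the other admissibility defect just fixed. A clean way around this is to linearly order $\G$ refining the partial order by inclusion, and to show that the leading monomial (in some lex order induced on the $c_A$'s) of a non-admissible expression gets strictly reduced at each rewriting step. The independence half, while conceptually transparent once the blowup description is in place, requires carefully identifying which $c_A$ classes survive in the cohomology of each stratum $\mathcal D_\Tc$ and matching the combinatorial bound $f(A)<d_{\supp(f)_A,A}$ with the geometric bound coming from the projective bundle formula.
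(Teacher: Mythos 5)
The paper does not prove this theorem: it is quoted as a known result from Section~3 of \cite{YuzBasi} and Section~2 of \cite{GaiffiBlowups}, and your two-part plan --- straightening non-admissible monomials via the relations $P_{\Hc,B}$ for spanning, and the iterated-blowup/projective-bundle decomposition of $H^*(Y_\G)$ (with the codimension of each center matching $d_{\supp(f)_A,A}$) for independence --- is exactly the strategy of those references, so your proposal is sound in outline and takes the same route as the cited proofs. The one imprecision worth flagging is in the non-nested case: choosing the antichain $\{A_1,\dots,A_k\}\subset\supp(f)$ with $B=A_1+\cdots+A_k\in\G$ gives $d_{\Hc,B}=0$, so $P_{\Hc,B}=c_{A_1}\cdots c_{A_k}$ and the relation annihilates the monomial outright rather than rewriting it into monomials with larger support --- a degenerate case that only simplifies your termination argument, since the measure $\sum_A f(A)\dim A$ strictly increases at every surviving rewriting step and is bounded by $\deg(m_f)\cdot\dim V^*$.
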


\section{The braid case}
\label{classicbraid}

Let us consider an hyperplane arrangement  in \(V\), represented by the set \(\Hc\) of the lines in \(V^*\) that are the annihilators of the hyperplanes;   we notice that there is  a minimal  building set that contains \(\Hc\) and it is  the building set of irreducibles \(\Fc_{\Hc}\) (see Definition \ref{def:irreducible}). The  De Concini-Procesi model obtained from \(\Fc_{\Hc}\) is called the minimal De Concini-Procesi model associated to \(\Hc\).

In this section we  will recall some results  in  the case  of the reflection group  \(G(1,1,n)=S_n\). Adopting a notation that will be extended to all the complex reflection groups, we  will  denote by \(\Fc_{G(1,1,n)}\) its associated minimal building set and by \(Y_{G(1,1,n)}\) (instead than \(Y_{\Fc_{G(1,1,n)}}\)) the corresponding minimal model. 
%We will recall  from \cite{callegarogaiffi3} a formula that computes the Betti numbers of \(Y_{G(1,1,n)}\).

 Let us consider  the real or complexified  braid arrangement, i.e. the  arrangement given in \(\R^n\) or \(\C^n\)  by the hyperplanes  defined by the equations \(x_i-x_j=0\).  The arrangement associated to the  reflection group   \(G(1,1,n)\) coincides with  the  root arrangement of type \(A_{n-1}\) and can be viewed as  the  arrangement in  the quotient space \(V\), where \(V=\R^n/<(1,1,...,1)>\) or \(V=\C^n/<(1,1,...,1)>\), whose hyperplanes  are the projections  of the hyperplanes \(x_i-x_j=0\).  The  projected hyperplanes can still be described by the equations \(x_i-x_j=0\), that are well defined in the quotient.

As we mentioned before, $\Fc_{G(1,1,n)}$ is the minimal building set that contains the  lines in \(V^*\) that are the annihilators of the hyperplanes \(x_i-x_j=0\):  it is made  by all the subspaces in \(V^*\) whose annihilators in \(V\) are described by equations like
 \(x_{i_1}=x_{i_2}= \cdots =x_{i_k}\) (\(k\geq 2\)).
 
 Therefore there  is a bijective correspondence between the elements of $\Fc_{G(1,1,n)}$ and the subsets of $\{1,\cdots,n\}$ of cardinality at
least two:  if the orthogonal of \(A\in \Fc_{G(1,1,n)}\) is the subspace  described by the equation  \(x_{i_1}=x_{i_2}= \cdots =x_{i_k}\) then we represent \(A\) by the set \(\{i_1,i_2,\ldots, i_k\}\).
% or by the symbol \((i_1,i_2,\ldots, i_k)\).
%, and we will represent these partitions of  $\{1,\cdots,n\}$ using  paretheses: 
As a consequence of Definition \ref{Gnested},   a $\Fc_{G(1,1,n)}$-nested set  \(\Sc\) is  represented by a set  (which we still call \(\Sc\)) of subsets of $\{1,\cdots,n\}$ with the property that any of its elements has  cardinality \(\geq 2\) and if \(I\) and \(J\) belong to \(\Sc\) than either \(I\cap J=\emptyset\) or one of the two sets is included into the other.

We observe that we  can  represent a $\Fc_{G(1,1,n)}$-nested set   \(\MS\) 
 by an oriented   forest on \(n\) leaves  in the following way. We consider the set \({\tilde \MS}=\MS\cup\{1\}\cup \{2\}\cup\cdots \cup \{n\}\).
Then the forest  coincides  with the Hasse diagram of  \({\tilde \MS}\) viewed as a poset by the  inclusion relation:  the roots of the trees correspond to  the maximal elements of \(\MS\), and the orientation goes from the roots to the leaves, that are the vertices \(\{1\},\{2\},\ldots, \{n\} \) (see Figure \ref{labellednested3}).

\begin{figure}[h]
 
 \center
\includegraphics[scale=0.7]{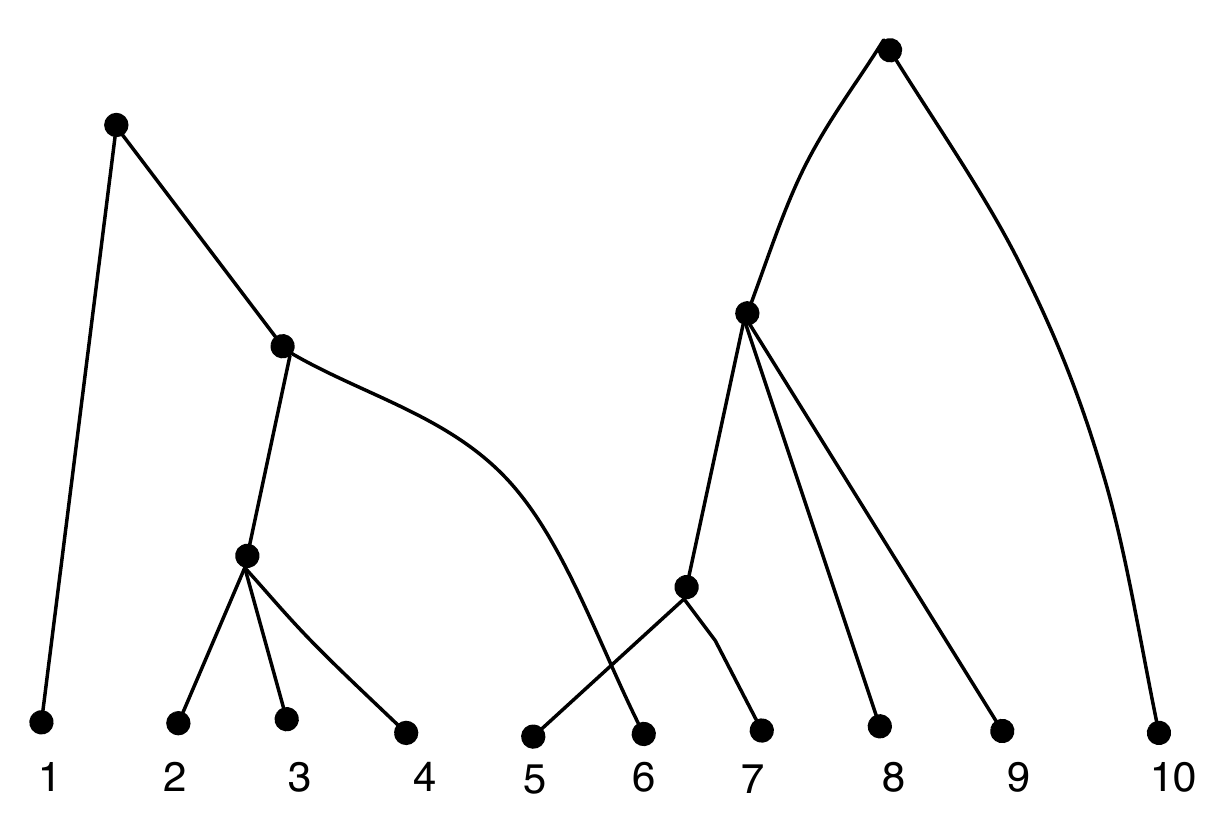}
\caption{This  forest  represents the  \(\Fc_{G(1,1,10)}\)-nested set
\(\MS\) made by the irreducibles  \( \{2,3,4\}\), \(\{2,3,4,6\}\), \(\{1,2,3,4,6\}\),
\(\{5,7\}\), \(\{5,7,8,9\}\), \(\{5,7,8,9,10\}\).}
\label{labellednested3}
\end{figure}

One can show, using a bijection proven in \cite{PeErdos}  (see \cite{gaifficayleynumbers} for a variant),   that there are  actions of `big' symmetric groups on the set of $\Fc_{G(1,1,n)}$-nested sets. Namely, the symmetric group \(S_{n+k-1}\) acts on the set of the $\Fc_{G(1,1,n)}$-nested sets \(\MS\) such that \(|\MS|=k\) and \(\MS\) includes \(\{1,2,...,n\}\).  These actions can be extended  to the basis of cohomology described in Theorem \ref{base coomologia}:  in \cite{callegarogaiffi3} (Theorem 10.1) it was shown that one can obtain, by counting the orbits of these actions,  an exponential (not recursive)  formula for a series that computes the Betti numbers of the models $Y_{G(1,1,n)}$.  

As we mentioned in the Introduction, some recursive  formulas for the Poincar\`e series of these varieties  are well known (see for instance  \cite{Manin}, \cite{YuzBasi}, \cite{GaiffiBlowups}). 

Let us describe  the formula obtained in  \cite{callegarogaiffi3}, that is the starting point  for our generalizations   in the next sections.  We denote by \(\Psi(q,t,z)\)  the following exponential generating series:
\[\Psi(q,t,z)= 1+ \sum_{n\geq 2, \; \MS}P(S) z^{|\MS|} \frac{t^{n+|\MS|-1}}{(n+|\MS|-1)!}\]
where, for every \(n\geq 2\), 
\begin{itemize}
\item  \(\MS\) ranges over all the nested sets of the building set \(\Fc_{G(1,1,n)}\);  
\item  \(P(\MS)\) is the polynomial, in the variable \(q\) (to be considered of degree 2),  that expresses the contribution to $H^*(Y_{G(1,1,n)}, \Z)$ provided by all the monomials \(m_f\) in the Yuzvinski basis such that \(supp \ f=\MS\).

\end{itemize}

\begin{teo}[see \cite{callegarogaiffi3}, Theorem 10.1]
 \label{teo:formulapoicareminimale}
 We have the following formula for the series \(\Psi(q,t,z)\): 
\[
 \Psi(q,t,z) = e^t\prod_{i\geq 3}e^{zq[i-2]_q\frac{t^i}{i!}}  
\]
 where \([j]_q\) denotes the \(q\)-analog of \(j\): \([j]_q=1+q+\cdots+q^{j-1}\).

 \end{teo}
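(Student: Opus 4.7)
My plan is to recognize both sides of the identity as exponential generating functions for the same family of set partitions, following the strategy of \cite{callegarogaiffi3}. The first step is to make the polynomial $P(\MS)$ explicit: using the forest picture of $\widetilde{\MS}:=\MS\cup\{\{1\},\ldots,\{n\}\}$, for each $A\in\MS$ let $v(A)$ denote the number of children of $A$ in the Hasse diagram of $\widetilde{\MS}$, so that these children partition $A$ into $v(A)$ nonempty blocks and a direct dimension count gives $d_{\supp(f)_A,A}=v(A)-1$. Since admissibility (Definition \ref{def:support}) requires $f(A)\geq 1$ for every $A\in\supp f=\MS$, the admissible values of $f(A)$ are exactly $\{1,\ldots,v(A)-2\}$, and summing $q^{f(A)}$ over them gives
\[
P(\MS)\;=\;\prod_{A\in\MS}q\,[v(A)-2]_q,
\]
which vanishes unless every internal vertex of $\MS$ has valence $\geq 3$.

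Next I would unpack the right-hand side as
\[
e^t\prod_{i\geq 3}e^{zq[i-2]_q t^i/i!}\;=\;\exp\!\left(t+\sum_{i\geq 3}zq[i-2]_q\,\frac{t^i}{i!}\right),
\]
which by the classical exponential formula is the EGF enumerating set partitions of $\{1,\ldots,m\}$ whose blocks are either singletons (weight $1$) or of size $i\geq 3$ (weight $zq[i-2]_q$), with blocks of size exactly $2$ forbidden. Combined with the formula for $P(\MS)$ above, proving the identity reduces to bijecting the nested sets that contribute to $\Psi$ with such partitions of $\{1,\ldots,n+|\MS|-1\}$, matching an internal vertex of valence $v$ to a block of size $v$ so that the weights on both sides coincide.

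For the case of a single tree this match is supplied by the variant of Pe's bijection proved in \cite{gaifficayleynumbers} (see also \cite{PeErdos}): rooted trees on $n$ labeled leaves with $k$ internal vertices of valence $\geq 2$ correspond bijectively to partitions of $\{1,\ldots,n+k-1\}$ into $k$ blocks of size $\geq 2$, with block sizes equal to valences. Restricting valences to $\geq 3$ restricts block sizes accordingly and aligns the weights, so the exponential formula then yields the desired product factorization in this case.

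The main obstacle is extending this correspondence from single trees to general forests. For a nested set $\MS$ whose associated forest $\widetilde{\MS}$ has $r$ connected components, the sum of the valences of internal vertices equals $n+|\MS|-r$, which is strictly less than the label set size $n+|\MS|-1$ when $r>1$, so the bijection must account for $r-1$ extra labels; these should appear as singletons in the partition, consistently with the $e^t$ factor standing outside the product. Verifying that this enlarged correspondence produces exactly the partitions enumerated by the exponential formula, with matching $q$-weights, is the combinatorial heart of the proof; once that is in place, the two generating series agree coefficient by coefficient and the identity follows.
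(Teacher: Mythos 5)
Your reduction is set up correctly: the formula $P(\MS)=\prod_{A\in\MS}q\,[v(A)-2]_q$ does follow from $d_{\supp(f)_A,A}=v(A)-1$ exactly as you say, and for a \emph{connected} forest (a single tree) the bijection of \cite{gaifficayleynumbers} with partitions of $\{1,\dots,n+|\MS|-1\}$ into blocks whose sizes are the valences finishes the argument. The gap is exactly where you place it, and it is a real one: your device for disconnected forests --- letting the $r-1$ ``extra'' labels appear as singleton blocks --- is not what the cited bijection produces, and as stated it cannot be the right correspondence. If you label the internal vertices level by level, the $r$ roots of the components receive the largest labels and are precisely the vertices covered by nothing, so the would-be singletons are always a fixed terminal segment of the label set; a general partition of $\{1,\dots,n+|\MS|-1\}$ into singletons and blocks of size $\geq 3$ places its singletons anywhere. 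So the ``enlarged correspondence'' you appeal to is not the obvious extension of the tree bijection, and you have not exhibited it.

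The paper (in the proof of Theorem \ref{prop:seriesoloweak}, of which this statement is the $r=1$ specialization) resolves the disconnected case differently: one adds a single extra vertex on top of the forest, turning it into a tree with $|\MS|+1$ internal vertices; the new root contributes one additional block, of cardinality $r\geq 2$, carrying exponent $0$ and forced to contain the maximum label. The factor $e^t$ is then read not as $\sum_{j\geq 0}t^j/j!$ (arbitrary singletons) but as $1+\sum_{i\geq 2}t^{i-1}/(i-1)!$ (at most one exponent-$0$ block, of size $i$, containing the maximum --- whence the shift from $t^i/i!$ to $t^{i-1}/(i-1)!$), and this is what makes the $t$-degree come out to $n+|\MS|-1$ rather than $n+|\MS|$. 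Your two readings of $e^t$ agree as formal series, so the identity you are aiming at is true, but the combinatorial model you would need to biject with is not the one the existing bijection delivers. Either prove your singleton version directly (which amounts to constructing a new bijection) or switch to the exponent-$0$-block model, after which the argument closes along the lines of the paper.
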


\begin{rem}
In \cite{callegarogaiffi3} the  result above is stated about   the minimal projective De Concini-Procesi model, that is obtained starting from the projectivized hyperplane arrangement,  but, as it was shown in \cite{DCP1}, the integer homology rings of a  model \(Y_\G\) and of its corresponding projective model \({\overline Y}_\G\) are isomorphic.

\end{rem}

  \begin{es}

 In order to compute the Poincar\'e polynomial of \(Y_{G(1,1,6)}\) from the formula above one has to single out all the monomials in \(\Psi\) whose  \(z,t\) component is \(t^kz^s\) with \(k-s=5\).
 A product of the exponential functions that appear in the formula   gives:
 \[\frac{t^5}{5!} [1]+ \frac{t^6}{6!}z [42q+22q^2+7q^3+q^4]+ \frac{t^7}{7!}z^2 [35q^3+105q^2]\]
Therefore the  Poincar\'e polynomial  of \(Y_{G(1,1,6)}\), that is equal to the Poincar\'e polynomial of the moduli space \( {\overline M}_{0,7}\),  is \(1+(42q+22q^2+7q^3+q^4)+(35q^3+105q^2)=1+42q+127q^2+42q^3+q^4\).
 
 \end{es}

\section{Extension to  \(G(r,1,n)\), \(G(r,p,n)\)  and  \(G(r,r,n)\)}
\label{sec:extension}
In this section we will describe the building sets of irreducibles associated to the groups \(G(r,p,n)\), and their corresponding nested sets.

We notice that this  combinatorial setting could  be described also in the language of Dowling lattices: for instance,  the intersection poset of the reflection arrangement of type \(G(r,p,n)\) when \(p<r\) is the Dowling lattice \(Q_n(\Z_r)\) (see \cite{dowling}), and we are considering its  minimal building set and its  nested set complex, according to the combinatorial definition of Feichtner and Kozlov in \cite{feichtnerkozlovincidence}. For this combinatorial approach, and for its further extensions to Bergman geometry, one could refer to \cite{ardilaklivans},  \cite{ardilareinerwilliams}, \cite{delucchinested}, \cite{feichtnercomplexestrees}, \cite{hultman}.
In particular, in the first sections of \cite{delucchinested} (and in Remark 4.7) one can find an useful overview on the many ``bridges'' between combinatorics and geometry related to this subject. 

\subsection{The building set of irreducibles}

The reflecting hyperplanes of the arrangement in \(\C^n\) associated with the full monoidal group \(G(r,1,n)\) coincide with the reflecting hyperplanes of \(G(r,p,n)\) when \(p<r\) and have been described in the Introduction.
 
One can easily check that the building set of irreducibles \(\Fc_{G(r,1,n)}\) (that is equal to \(\Fc_{G(r,p,n)}\) when \(p<r\))  is made by two families of subspaces. The subspaces in the first family are   the  {\em strong} subspaces \(\overline{H}_{i_1,i_2,...,i_k}\) (the adjective  `strong' comes from the analysis of the \(B_n\) and \(D_n\) case in \cite{YuzBasi}), that are  the  annihilators of the subspaces in \(\C^n\) described by the equations
\[x_{i_1}=x_{i_2}=\cdots= x_{i_t}=0\]
where \(1\leq t \leq n\). We can represent them by associating to \(\overline{H}_{i_1,i_2,...,i_t}\) the subset \(\{0,i_1,i_2,...,i_t\}\) of \(\{0,1,...,n \}\).
The second family is made by   the  {\em weak} subspaces, that are  the  annihilators \(H_{i_1,i_2,...,i_t}(\alpha_2,...,\alpha_t)\) 
of the subspaces in \(\C^n\) described by the equations:
\[x_{i_1}=\zeta^{\alpha_2}x_{i_2}=\cdots= \zeta^{\alpha_t}x_{i_t}\]
where \(2\leq t \leq n\) and, for every \(s\), \(0\leq \alpha_s\leq r-1\).
\begin{rem}[Notation]
\label{notation rem}
Let us suppose that \(i_1<i_2<\cdots < i_t\); then we can represent these weak subspaces by associating to \(H_{i_1,i_2,...,i_t}(\alpha_2,...\alpha_t)\) the  {\em weighted} subset \(\{i_1, \stackrel{\alpha_2}i_2,...,\stackrel{\alpha_t}i_t\}\) of \(\{1,...,n\}\). The weights are integers modulo \(r\), and here (and in the sequel) if a weight is 0 we will omit to write it.

\end{rem}

%The hyperplanes of the arrangement in \(\C^n\) associated with the subgroup  \(G(r,r,n)\) of  \(G(r,1,n)\) are a subset of the hyperplanes associated with \(G(r,1,n)\):  they are the hyperplanes   
%\(x_i-\zeta^kx_j=0\), where \(\zeta\) is, as before,  a primitive \(r\)-th root of 1.

We observe that the building set of irreducibles \(\Fc_{G(r,r,n)}\)  can be obtained from   \(\Fc_{G(r,1,n)}\) by removing  some   strong subspaces, namely the  hyperplanes \(\overline{H}_{i}\), for every \(i=1,...,n\).
Moreover, if \(r=2\), i.e. in the \(D_n\) case, one needs to remove also the two dimensional subspaces  \(\overline{H}_{i,j}\).
In fact we notice that the subspaces \(\overline{H}_{i,j}\) are irreducible only if \(r\geq 3\): since the \(r\) lines \(H_{i,j}(\alpha)\)  (with \(0\leq \alpha\leq r-1\))  belong to \(\Fc_{G(r,r,n)}\),  when \(r\geq 3\) it is not true that the two dimensional  subspace \(\overline{H}_{i,j}\) is the direct sum of the maximal subspaces of \(\Fc_{G(r,r,n)}\) contained in it.

\subsection{Nested sets for \(\Fc_{G(r,1,n)}\) and  \(\Fc_{G(r,r,n)}\)}

The  nested sets  \(\Sc=\{A_1,...,A_m\}\) for \(\Fc_{G(r,1,n)}\) or \(\Fc_{G(r,r,n)}\), according to Definition \ref{Gnested},  are characterized by the following properties: 
\begin{itemize}
\item given  any pair of subspaces \(A_i,A_j\in \Sc\), they are in direct sum or they are one included into the other;

\item if there are strong subspaces in \(\Sc\), they are linearly ordered by inclusion;

\item a strong subspace is never included into a weak subspace.

\end{itemize}
According to the representation of the irreducibles by  subsets of \(\{0, 1,....,n\}\) (more precisely, the subsets that do not  contain 0 are weighted subsets) described in the preceding section, a nested set is represented by a set \(\{A'_1,...,A'_m\}\) of (possibly weighted) subsets of \(\{0,...,n\}\) with the following properties:
\begin{itemize}
\item  the subsets that contain 0  are not weighted; they   are linearly ordered by inclusion; 
\item the subsets that do not  contain 0 are weighted; 
\item for any pair of subsets  \(A'_i,A'_j\), we have that, forgetting their weights, they are one included into the other or  disjoint; if  \(A'_i,A'_j\) both represent weak subspaces one  included into the other (say \(A'_i \subset A'_j\)), then their weights must be compatible. Since  we adopt for \(A'_i\) and \(A'_j\)  the  notation  of Remark \ref{notation rem}, this means that, up to the multiplication of all the weights of   \(A'_i\) by  the same power of \(\zeta\),  the weights associated to the same numbers must be equal.

\end{itemize}

We can represent a nested set by an oriented {\em weighted forest} as in Figure \ref{grafopesato}.
Every internal vertex \(v\) represents the   subset obtained considering the leaves that belong to the subtree stemming from \(v\). In particular, if \(v\) is weak, it represents a weighted subset, in the following way.
First of all,  if  a weak vertex is the root of a tree,  we  put its weight to be equal to 0 (so we don't write it). Then, given any weak vertex \(w\), the weight that one appends to the leaf \(i\) in its associated  weighted subset  is given by the sum (modulo \(r\))  of the weights that one finds in the oriented connected path from \(w\) to \(i\) (we do not take into account  the weight of \(w\)).
According to the above described rules, there is a unique way to put the  weights in the weighted forest respecting   the  notation  of Remark \ref{notation rem}.

 \begin{figure}[h]
 
 \center
\includegraphics[scale=0.6]{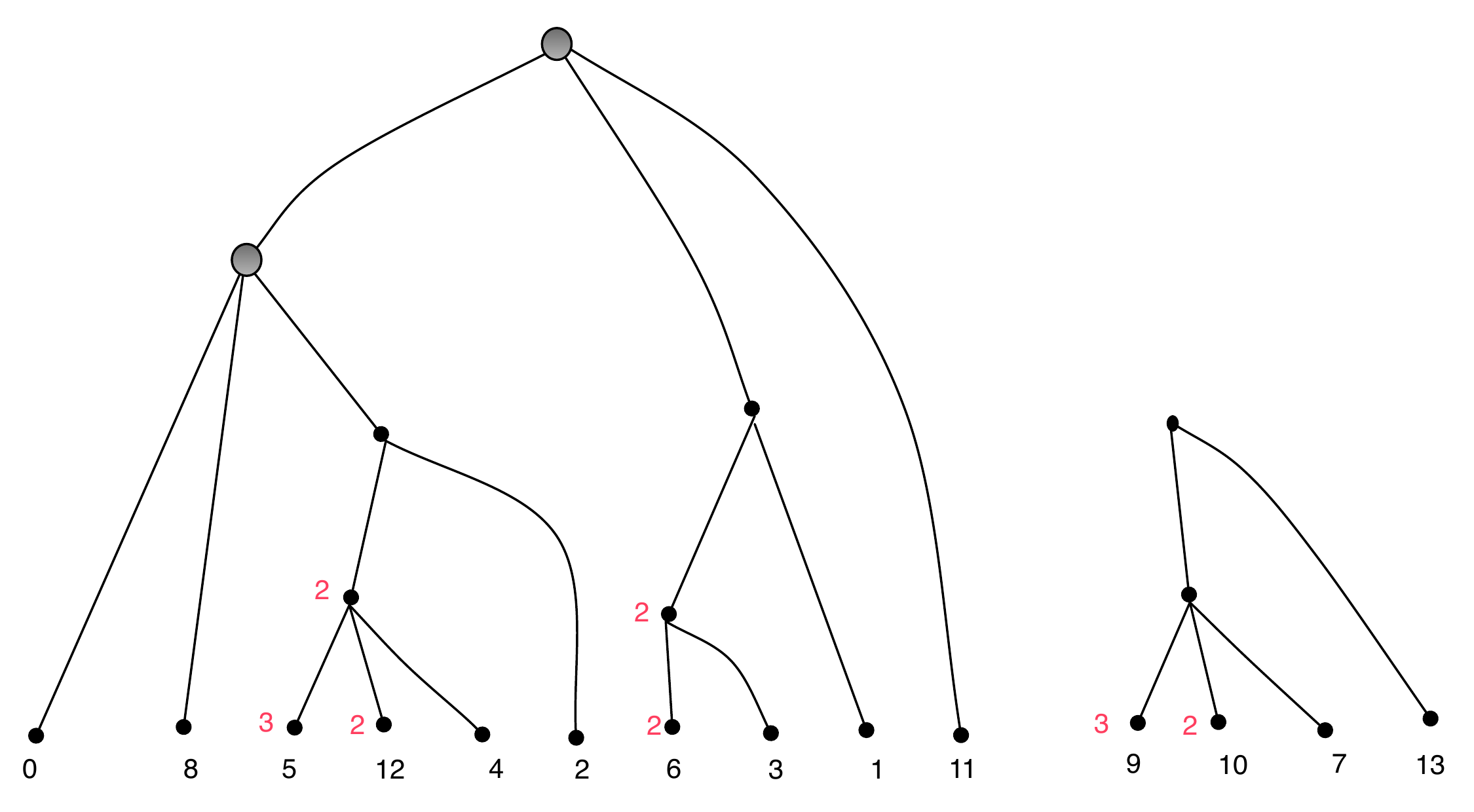}
\caption{A nested set in the case \(G(4,1,13)\). The 'big' internal vertices represent the strong subspaces. The red numbers are the weights. The nested set represented in the picture is therefore made by the  strong subspaces: \(\{0,2,4,5,8,12\}, \{0,1,2,3,4,5,6,8,11,12\}\) and by the weak subspaces \(\{\stackrel{} 4, \stackrel{\textcolor{red}{3}}5,\stackrel{\textcolor{red}{2}}{12}\}\), \(\{\stackrel{}2, \stackrel{\textcolor{red}{2}}4,\stackrel{\textcolor{red}{1}}5, \stackrel{}{12}\}\), \(\{\stackrel{}3, \stackrel{\textcolor{red}{2}}6\}\), \(\{\stackrel{}1, \stackrel{\textcolor{red}{2}}3,\stackrel{}6\}\), \(\{\stackrel{}7, \stackrel{\textcolor{red}{3}}9,\stackrel{\textcolor{red}{2}}{10}\}\), \(\{\stackrel{}7, \stackrel{\textcolor{red}{3}}9,\stackrel{\textcolor{red}{2}}{10}, \stackrel{}{13}\}\). }
\label{grafopesato}
\end{figure}

\section{Series}
\label{sec:series}
The results of Section \ref{classicbraid} on the computation of Betti numbers of \(Y_{G(1,1,n)}\) can be extended to the models \(Y_{G(r,1,n)}\) (\(=Y_{G(r,p,n)}\) when  \(p<r\))  and \(Y_{G(r,r,n)}\).
Our goal is to give a non recursive formula for the Poincar\'e series
\[ \Phi_{G(r,a)}(q,t)=1+ \sum_{n\geq 2} Poin(Y_{G(r,a,n)})(q)  \frac{t^{n}}{n!}\]
where  \(Poin(Y_{G(r,a,n)})(q) \) is the Poincar\'e polynomial of the model \(Y_{G(r,a,n)}\).

We start by singling out the contribution to \(\Phi_{G(r,a)}(q,t)\) given by the monomials of the Yuzvinski basis whose support does not contain strong subspaces. In terms of the Dowling lattice, the supports of these monomials are nested sets whose elements belong to the  subposet \(Q_n^0(\Z^r)\) of \(Q_n(\Z^r)\) defined by Hultman in \cite{hultman}.
\begin{defin}
Let \(a=1\) or \(a=r\) and let  us denote by \(K_{G(r,a)}(q,t,z)\)  the following exponential generating series:
\[K_{G(r,a)}(q,t,z)= 1+ \sum_{n\geq 2, \; \MS}P(S) z^{|\MS|} \frac{t^{n+|\MS|-1}}{(n+|\MS|-1)!}\]
where, for every \(n\geq 2\) (while \(r\) remains fixed),
\begin{itemize}
\item  \(\MS\) ranges over all the nested sets of the building set \(\Fc_{G(r,a,n)}\) that do not contain strong subspaces;  
\item  \(P(\MS)\) is the polynomial, in the variable \(q\),  that expresses the contribution to $H^*(Y_{\Fc_{G(r,a,n)}}, \Z)$ provided by all the monomials \(m_f\) in the Yuzvinski basis such that \(supp \ f=\MS\). 
%We consider  \(q\) of  degree 2.

\end{itemize}
\end{defin}

We notice that the series \(K_{G(r,a)}(q,t,z)\) doesn't change  in the two cases \(G(r,1,n)\) or \(G(r,r,n)\),  since only weak subspaces are involved. 

\begin{teo}
 \label{prop:seriesoloweak}
 We have the following formula for the series \(K_{G(r,a)}(q,t,z)\) (when \(a=1\) or \(a=r\)): 
\[
K_{G(r,a)}(q,t,z) = e^{t}\prod_{i\geq 3}e^{\frac{z}{r}q[i-2]_q\frac{(rt)^i}{i!}}  
\]
 where \([j]_q\) denotes the \(q\)-analog of \(j\): \([j]_q=1+q+\cdots+q^{j-1}\).

\end{teo}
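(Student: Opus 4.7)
The plan is to extend the approach of the proof of Theorem~\ref{teo:formulapoicareminimale} by decomposing each weighted nested set into an underlying $A_n$-type forest together with a compatible weight decoration, and then transporting the resulting sum through the bijection of \cite{gaifficayleynumbers}.

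First, I would observe that any $\Fc_{G(r,a,n)}$-nested set $\MS$ containing only weak subspaces admits a canonical projection to an $\Fc_{G(1,1,n)}$-nested set $\MS_0$ by forgetting the weights (a weak subspace with support $S$ maps to the $A_n$-irreducible with the same support). I would then stratify the sum defining $K_{G(r,a)}(q,t,z)$ according to this underlying unweighted forest, so that the series becomes a weighted version of the $A_n$ sum.

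Next, I would verify that the Yuzvinski polynomial $P(\MS)$ depends only on $\MS_0$. Since a weak subspace on a support of size $t$ has dimension $t-1$, exactly as the corresponding $A_n$-irreducible, for any internal vertex $v$ of the forest with $m_v$ children (leaves or internal vertices) one computes $d_{\supp(f)_A, A}=m_v-1$. Hence the admissible exponents at $v$ range in $\{1,\ldots,m_v-2\}$, contributing $q\,[m_v-2]_q$ (vanishing when $m_v=2$, which explains why the product starts at $m=3$). Then I would count the fibres of the projection: for fixed $\MS_0$, the number of weak-subspace nested sets projecting to it equals $\prod_v r^{m_v-1}$, the product running over internal vertices. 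One way to see this is to parameterize weight decorations by independent scalars in $\Z/r\Z$ on each edge of the forest, and then quotient by the local twisting action that fixes all represented subsets and contributes a factor of $r$ at each internal vertex; the net count simplifies to $\prod_v r^{m_v-1}$. This factorization is the main technical step.

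Combining these ingredients with the bijection of \cite{gaifficayleynumbers} between unweighted forests and set partitions of $\{1,\ldots,n+k-1\}$ into $k$ parts of size $\geq 2$ (a part of size $m$ corresponding to an internal vertex with $m$ children), and invoking the exponential formula for set partitions (where isolated leaves correspond to singletons of weight $1$), I would obtain
\[
K_{G(r,a)}(q,t,z) \;=\; \exp\!\left(t + \sum_{m\geq 3} r^{m-1}\, z\, q\, [m-2]_q\, \frac{t^m}{m!}\right),
\]
which rewrites as $e^{t}\prod_{m\geq 3}\exp\!\bigl(\tfrac{z}{r}\, q\, [m-2]_q\, \tfrac{(rt)^m}{m!}\bigr)$, since $r^{m-1}t^m/m!=(rt)^m/(r\cdot m!)$. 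The main obstacle is the weight-factorization claim; once that is secured, the rest is a direct $r$-weighted transport of the proof of Theorem~\ref{teo:formulapoicareminimale}.
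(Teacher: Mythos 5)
Your proposal is correct and follows essentially the same route as the paper: both reduce the count to the partition encoding of \cite{gaifficayleynumbers}, with the $q$-polynomial $q[m_v-2]_q$ coming from $d_{\supp(f)_A,A}=m_v-1$ and the factor $r^{m_v-1}$ per internal vertex accounting for the weight choices (the paper absorbs this count directly into the $\frac{z}{r}\frac{(rt)^i}{i!}$ terms of its weighted-partition bijection rather than phrasing it as a fiber computation over the unweighted $A_{n-1}$ forest, but the content is identical, including your gauge-quotient argument for $\prod_v r^{m_v-1}$). The one inaccuracy is your parenthetical attributing the factor $e^t$ to ``isolated leaves corresponding to singletons'': in the paper's encoding the partition of $\{1,\dots,n+k-1\}$ has no singletons, and $e^t=1+\sum_{i\geq 2}\frac{t^{i-1}}{(i-1)!}$ arises instead from the optional exponent-$0$ block containing the maximum (which collects the roots of the forest, including isolated leaves, and carries no $r$-factor since it represents no subspace); this is a bookkeeping slip that does not affect the final formula.
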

 \begin{proof} This  is a  variant of Theorem \ref{teo:formulapoicareminimale},  nevertheless we write the details of the proof  for the convenience of the reader.  
 %First we observe that it is sufficient to consider the case \(G(r,1,n)\).
 
A  monomial \(m\) of the Yuzvinsky basis of \(Y_{G(r,1,n)}\)  can be represented by a  weighted partition with exponents, in the following way.
The  support of the monomial is given, as we know from Theorem \ref{base coomologia}, by a \(\Fc_{G(r,1,n)}\)-nested set, and we are considering the monomials such that this nested set is made by weak subspaces. 

 For instance in  \(H^{*}(Y_{G(4,1,13)})\) the support of the monomial 
\[  c_{\{\stackrel{} 4, \stackrel{\textcolor{red}{3}}5,\stackrel{\textcolor{red}{2}}{12}\}} c^2_{\{\stackrel{}2, \stackrel{\textcolor{red}{2}}4,\stackrel{\textcolor{red}{1}}5, 8, 11, \stackrel{}{12}\}}c_{\{\stackrel{}7, \stackrel{\textcolor{red}{3}}9,\stackrel{\textcolor{red}{2}}{10}\}}c^3_{\{1,3,6, \stackrel{}7, \stackrel{\textcolor{red}{3}}9,\stackrel{\textcolor{red}{2}}{10}, \stackrel{}{13}\}}\]
is described  by the weighted forest in Figure \ref{grafopesato2}.

 \begin{figure}[h]
 
 \center
\includegraphics[scale=0.6]{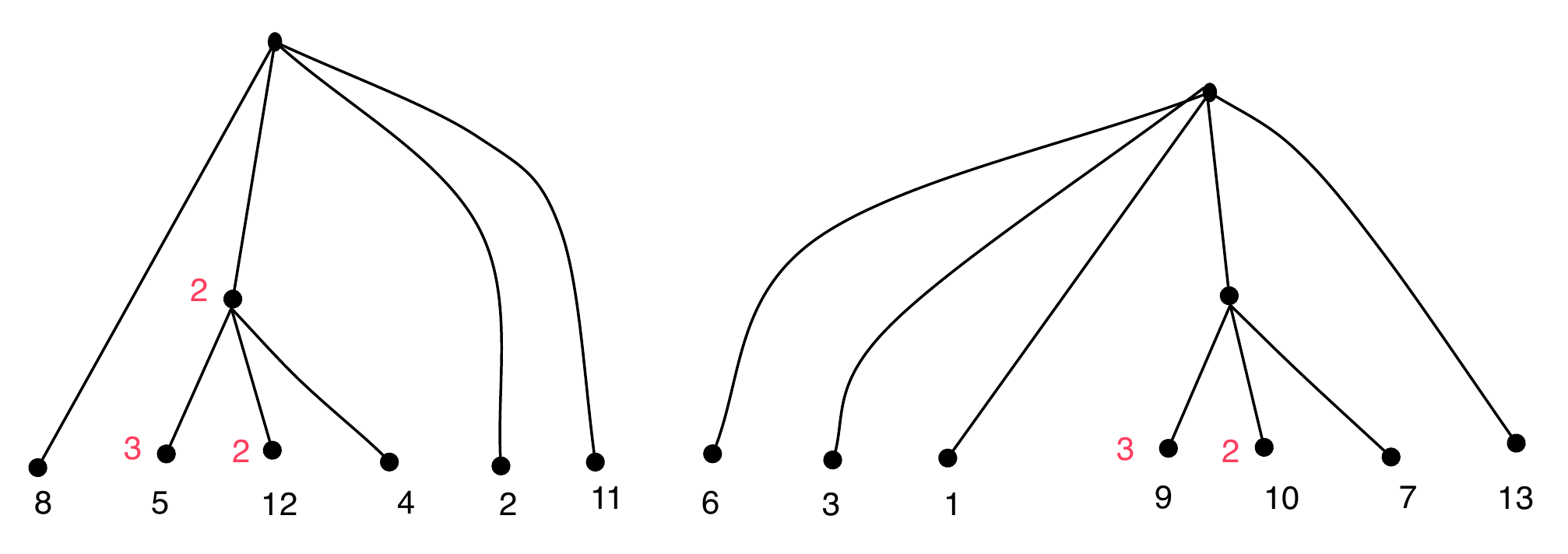}
\caption{This weighted forest represents the support of the monomial
\(c_{\{\stackrel{} 4, \stackrel{\textcolor{red}{3}}5,\stackrel{\textcolor{red}{2}}{12}\}} c^2_{\{\stackrel{}2, \stackrel{\textcolor{red}{2}}4,\stackrel{\textcolor{red}{1}}5, 8, 11, \stackrel{}{12}\}}c_{\{\stackrel{}7, \stackrel{\textcolor{red}{3}}9,\stackrel{\textcolor{red}{2}}{10}\}}c^3_{\{1,3,6, \stackrel{}7, \stackrel{\textcolor{red}{3}}9,\stackrel{\textcolor{red}{2}}{10}, \stackrel{}{13}\}}\) in \(H^{*}(Y_{G(4,1,13)})\).}
\label{grafopesato2}
\end{figure}

Then, following \cite{gaifficayleynumbers}, we can label the internal vertices of  the forest as in Figure \ref{grafopesato2b}:  we put labels level by level, and the label of a vertex   \(v\) is less than the label of a  vertex \(w\) iff  the subtree that stems from \(v\) contains a leaf whose label  is smaller than the labels of all the leaves in the subtree that stems from \(w\). If the forest has more than one connected component (as in Figure \ref{grafopesato2b}), we add an extra  vertex on top, with the maximum label. 

 \begin{figure}[h]
 
 \center
\includegraphics[scale=0.6]{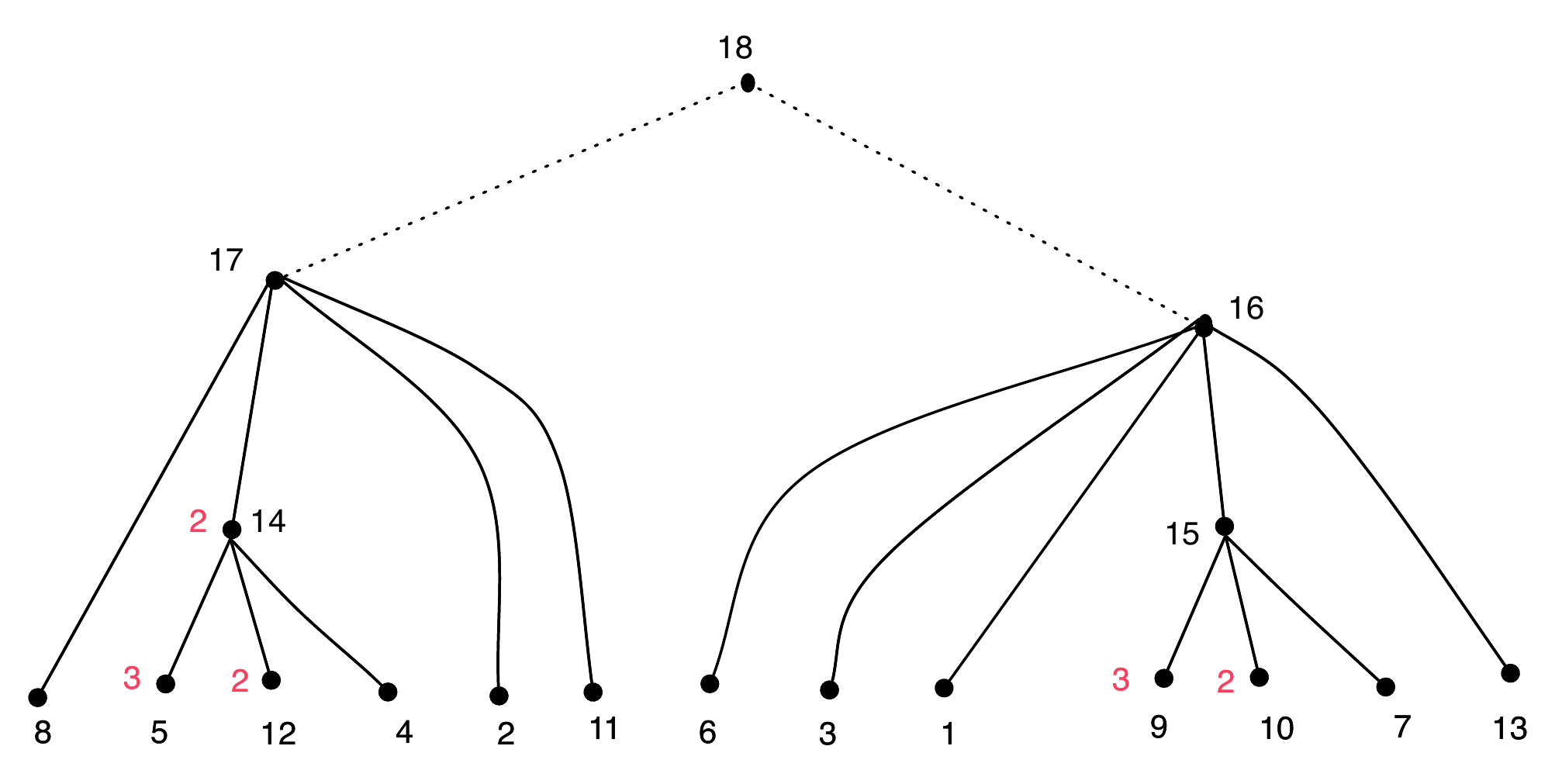}
\caption{We can label  (level by level) the internal vertices of the forest of  Figure \ref{grafopesato2} with the numbers \(14,15,16,17\). Since the forest is not connected, we add a vertex on top, with the maximum label \(18\).}
\label{grafopesato2b}
\end{figure}

We can then associate to the support of the monomial \(m\) a weighted partition, by looking at the internal vertices of the labelled forest and taking into account, for each internal vertex,  the labels and weights  of the vertices covered by it.
For instance, looking at the weighted forest represented in Figure \ref{grafopesato2b}, we associate to it the 
the weighted  partition: 
\[  \{\stackrel{} 4, \stackrel{\textcolor{red}{3}}5,\stackrel{\textcolor{red}{2}}{12}\} \{\stackrel{}2,  8, 11,\stackrel{\textcolor{red}{2}}{14}\}\{\stackrel{}7, \stackrel{\textcolor{red}{3}}9,\stackrel{\textcolor{red}{2}}{10}\}\{1,3,6,  \stackrel{}{13}, 15\}, \{16,17\}\]

Finally we can associate     to the monomial \[  c_{\{\stackrel{} 4, \stackrel{\textcolor{red}{3}}5,\stackrel{\textcolor{red}{2}}{12}\}} c^2_{\{\stackrel{}2, \stackrel{\textcolor{red}{2}}4,\stackrel{\textcolor{red}{1}}5, 8, 11, \stackrel{}{12}\}}c_{\{\stackrel{}7, \stackrel{\textcolor{red}{3}}9,\stackrel{\textcolor{red}{2}}{10}\}}c^3_{\{1,3,6, \stackrel{}7, \stackrel{\textcolor{red}{3}}9,\stackrel{\textcolor{red}{2}}{10}, \stackrel{}{13}\}}\]
the following weighted partition of \(\{1,...,17\}\) with exponents  attached to the parts, in order to   keep into account the exponents in the monomial: 
\[  \{\stackrel{} 4, \stackrel{\textcolor{red}{3}}5,\stackrel{\textcolor{red}{2}}{12}\} \{\stackrel{}2,  8, 11,\stackrel{\textcolor{red}{2}}{14}\}^2\{\stackrel{}7, \stackrel{\textcolor{red}{3}}9,\stackrel{\textcolor{red}{2}}{10}\}\{1,3,6,  \stackrel{}{13}, 15\}^3\{16,17\}^0\]

\begin{rem}
This process, if applied to a forest with more then one connected component, adds an extra vertex on top.  In this case we obtain a part with exponent 0, like \(\{16,17\}^0\) in the example above.
We notice that this part, by construction, contains 17, the maximum  of \(\{1,2,...,17\}\).
\end{rem}

Going back to our proof,   let us denote by \(\Bc(r,1)_{weak}\) the set of all the Yuzvinski basis monomials of all the models  \(Y_{G(r,1,n)}\) (\(n\geq 3\)) whose support is made by weak subsets.  As we remarked above, we can think of these monomials as weighted partitions with exponents  attached to the parts.  Let us now denote by \(\Pc(r,1)\) the union, for every \(j\geq 3\), of the set of weighted partitions of \(\{1,..,j\}\)  with exponents, such that:
\begin{itemize}
\item at  most one of the parts has exponent equal to 0 (and cardinality \(\geq 2\)). If this part exists, it contains the maximum number \(j\);

\item the other parts \(I\)  have cardinality  \(\geq 3\) and their exponent \(\alpha_I\) satisfies \(1\leq \alpha_I\leq |I|-2\).

\end{itemize} 
  As an easy corollary of Theorem 2.1 in \cite{gaifficayleynumbers}, we know that the above described map from \(\Bc(r,1)_{weak}\)  to  \(\Pc(r,1)\)  is bijective\footnote{We remark that another  bijection  between these two sets can be deduced from Theorem 1 of  \cite{PeErdos}.}, therefore we can find a formula for \(K_{G(r,a)}(q,t,z)\) by counting  the contribution of all the elements of  \(\Pc(r,1)\).

Then  we single out the contribution given to \(K_{G(r,a)}(q,t,z)\) by all the parts    represented by subsets with cardinality \(i\geq 3\) and with nonzero exponent. 
  If in a weighted partition there is only one such part  its contribution is \(\frac{z}{r}(q+q^2+\cdots+ q^{i-2})\frac{(rt)^i}{i!}\). If there are \(j\) such parts the associated  contribution is \((\frac{z}{r})^j(q+q^2+\cdots+ q^{i-2})^j \frac{(\frac{(rt)^i}{i!})^j}{j!}\). 
  In conclusion the contribution  of all the parts    represented by subsets with cardinality \(i\geq 3\) and with nonzero label is provided by
 \[e^{\frac{z}{r}(q+q^2+\cdots+ q^{i-2})\frac{(rt)^i}{i!}}-1\]

 Let us now focus on the contribution to \(K_{G(r,a)}(q,t,z)\) that comes  from the parts with cardinality \(i\geq 2\) and with exponent  equal to 0. For every monomial in the basis there is  at most one such  part (that  must contain the highest number of the set that we are partitioning), and its contribution is \(\frac{t^{i-1}}{(i-1)!}\). We note  that by construction all the weights of the numbers that belong to  this part are equal to 0, since it  does not represent a subspace in the support of the monomial.  
 %At the same way, the exponent \(i-1\) (instead of  \(i\)) takes into account that such part does not contribute to the cardinality \(|\MS|\).

The total contribution of the elements with exponent  equal to 0 is therefore     \(\sum_{i\geq 2}\frac{t^{i-1}}{(i-1)!}\). Summing up, we observe that the expression  
  \[ e^t\prod_{i\geq 3}e^{\frac{z}{r}q[i-2]_q\frac{(rt)^i}{i!}}  \]
  allows us to take into account the contribution to \(K_{G(r,a)}(q,t,z)\) of all the  elements  in \(\Pc(r,1)\).
   \end{proof}

\subsection{Series for \(G=G(r,1,n)\)}
\label{section:seriesgr1n}
Let us now find a formula for the Poincar\'e series \(\Phi_{G(r,1)}(q,t)\) of the models  \(Y_{G(r,1,n)}\).   

In this case the  support of  a monomial of the Yuzvinsky basis can be represented by a forest that has a shape like the one suggested in Figure \ref{schemaforesta}.

 There may be at most one  connected component with strong vertices. Then the  strong  vertices, as in the picture, form a chain, and below each of them there is a subgraph made by weak vertices.  There may be other connected components  that are  made by weak vertices.

 \begin{figure}[h]
 
 \center
\includegraphics[scale=0.4]{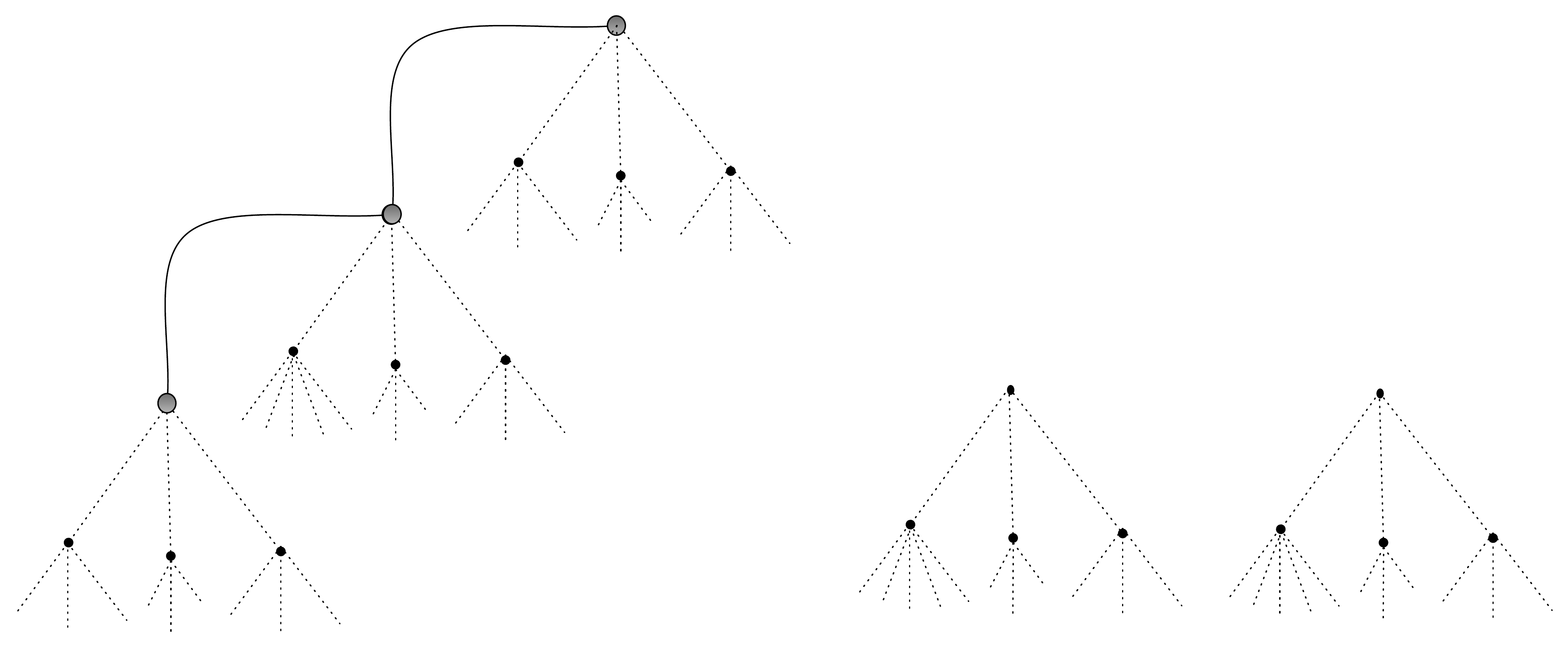}
\caption{The shape of a forest  that represents the support of a basis monomial of \(H^*(Y_{G(r,1,n)})\).   There is  at most one  component with strong vertices. Then the  strong  vertices, as in the picture, form a chain, and below each of them there is a subgraph made by weak vertices.  There may be other connected components  that are  made by weak vertices.}
\label{schemaforesta}
\end{figure}
 The following series will play a key role in computing   the contribution of a weak subtree that stems from a strong vertex:
\[\gamma_{G(r,1)}(q,t,z)=\left( \sum_{i\geq 2}\frac{t^{i-1}}{(i-1)!}q[i-1]_q\right) \prod_{i\geq 3}e^{q[i-2]_q\frac{z}{r}\left( \frac{tr}{i!}\right)^i}\]
The idea is that we can  evaluate the series  \( \gamma_{G(r,1)}(q,t,z) \) in \(z=\frac{\partial}{\partial t}\) and then  integrate  (formally, with constant equal to 0) with respect to the variable \(t\).  As a  result we get a new series in the variables \(q,t\) which we  denote by \( \Gamma_{G(r,1)}(q,t)\):

\[\Gamma_{G(r,1)}(q,t)=\int \gamma_{G(r,1)}(q,t,\frac{\partial}{\partial t})\]
\begin{rem}
\label{rem:ordinederivata}
Here and in the sequel, when we say that we   evaluate a  series in  \(z=\frac{\partial}{\partial t}\), we mean that first we compute the series, then, in  every monomial that appears in the final expression of the series, we put \(z=\frac{\partial}{\partial t}\).
\end{rem}
At the same way we define
\[{\mathcal K}_{G(r,1)}(q,t)= 1+ \int K_{G(r,1)}(q,t,\frac{\partial}{\partial t})\]
\begin{teo}
\label{teog(r1n)}
We have the following formula for the Poincar\'e series of the models \(Y_{G(r,1,n)}\):
\[\Phi_{G(r,1)}(q,t)=\frac{1}{1-\Gamma_{G(r,1)}(q,t)}{\mathcal K}_{G(r,1)}(q,t)\]

\end{teo}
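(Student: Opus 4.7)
My plan is to decompose the supports of Yuzvinski basis monomials for $Y_{G(r,1,n)}$ exactly as suggested by Figure \ref{schemaforesta}: namely into (i) a possibly empty \emph{strong trunk} consisting of a chain $V_1\supsetneq V_2\supsetneq\cdots\supsetneq V_k$ of strong vertices, each with its own weak subtrees attached directly below it (one such packet I will call a \emph{layer}), together with (ii) finitely many connected components made entirely of weak vertices. Because strong elements of $\Fc_{G(r,1,n)}$ are totally ordered by inclusion in any nested set, and because a strong subspace can never be nested below a weak one, this decomposition is canonical on each forest, and the label sets of the different pieces are disjoint. Hence by the EGF product rule for labeled structures it suffices to identify separately the exponential generating series of (a) the purely weak components and (b) the strong trunk.

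For part (a), Theorem \ref{prop:seriesoloweak} provides $K_{G(r,1)}(q,t,z)$ as the EGF (in the variables $(t,z)$, with $t$ tracking leaves and $z$ tracking internal vertices) of weak‑only nested sets, counted after the Erd\H os--Perl\'es labeling bijection recalled in Section \ref{classicbraid}, which adds one label per internal vertex of the forest plus one extra label for the artificial root vertex on top. To pass from ``weighted partition of $\{1,\dots,n+|\Sc|-1\}$ with exponents'' back to ``basis monomial on $n$ leaves'' one formally substitutes $z=\partial/\partial t$ (which by Remark \ref{rem:ordinederivata} strips the $|\Sc|$ labels introduced for internal vertices) and then integrates once in $t$ (which strips the extra top label). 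This is precisely the definition of $\mathcal{K}_{G(r,1)}(q,t)$, so part (a) contributes the factor $\mathcal{K}_{G(r,1)}$.

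For part (b), I would first analyze a single layer, consisting of one strong vertex $V$ with its weak subtrees. A direct computation of the quantity $d_{\supp(f)_V,V}$ from Definition \ref{def:support}, using the fact that a weak subtree of $V$ with leaf count $s$ spans a codimension‑one subspace inside its leaf span, shows that admissibility restricts the exponent $f(V)$ to the range $\{1,2,\dots,i-1\}$ when $V$ has $i-1$ direct children (leaves or weak subtree tops). This contributes $q[i-1]_q$, while $t^{i-1}/(i-1)!$ counts the ordered choice of the $i-1$ labeled entities directly below $V$ (after the labeling bijection), and the factor $\prod_{i\geq 3}\exp\!\bigl(q[i-2]_q\tfrac{z}{r}\tfrac{(rt)^i}{i!}\bigr)$ records the internal structure of those weak subtrees exactly as in the proof of Theorem \ref{prop:seriesoloweak}; notice that the $e^t$ factor of $K_{G(r,1)}$ does \emph{not} reappear here, because the role of the extra top vertex is now played by the strong vertex $V$ itself. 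This identifies the single‑layer EGF with $\gamma_{G(r,1)}(q,t,z)$, and the same $z=\partial/\partial t$ substitution followed by integration produces $\Gamma_{G(r,1)}(q,t)$.

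Finally, I would glue. Since the $k$ layers of the strong trunk are linearly ordered (top to bottom) and live on pairwise disjoint sublabels, the EGF of a chain of exactly $k$ strong vertices is $\Gamma_{G(r,1)}^{\,k}$; summing over $k\geq 0$ (the $k=0$ term being the empty trunk) gives $1/(1-\Gamma_{G(r,1)})$, and multiplication by the EGF of the free weak components yields the claimed formula. The main obstacle I expect is the bookkeeping around the labeling bijection in the presence of the chain: one must verify that the $z=\partial/\partial t$ substitution distributes correctly across the product $\Gamma^k$, i.e.\ that each layer carries exactly one ``extra label'' (for the strong vertex acting as a local top) while the single global integration attached to $\mathcal{K}$ (and absent from the factors of $\Gamma$) accounts for the unique top of the whole forest; this asymmetry between $\gamma$ and $K$—the missing $e^t$—is precisely what makes the geometric series $1/(1-\Gamma)$ compose cleanly with $\mathcal{K}$.
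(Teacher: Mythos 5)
Your proposal follows essentially the same route as the paper: the same canonical splitting of a support forest into purely weak components (giving the factor $\mathcal{K}_{G(r,1)}$) and a linearly ordered chain of strong ``layers'' (each contributing $\Gamma_{G(r,1)}$, whence the geometric series $1/(1-\Gamma_{G(r,1)})$), together with the same key observation that $\gamma_{G(r,1)}$ differs from $K_{G(r,1)}$ only in its first factor. The one point you should repair is the admissibility count for a strong vertex $V$: since each weak child of $V$ has codimension one in the span of its leaves and each directly covered leaf contributes one to the dimension drop, $d_{\supp(f)_V,V}$ equals the number $i$ of weak objects (weak subtree tops or leaves) covered by $V$ --- a possible strong child below $V$ cancels out of this computation --- so the exponent range $\{1,\dots,i-1\}$, i.e.\ the factor $q[i-1]_q$, goes with $i$ covered weak objects, not with $i-1$ direct children as you state. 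This is consistent with the accompanying factor $t^{i-1}/(i-1)!$ because one of those $i$ objects carries the forced local maximum label of the layer (restored by the final integration), exactly as the exponent-zero part does in the proof of Theorem \ref{prop:seriesoloweak}; with your indexing the $q$-polynomial would come out as $q[i-2]_q$ and the degree bookkeeping for $\int\gamma_{G(r,1)}(q,t,\partial/\partial t)$ would be off by one.
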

\begin{proof}
First we observe that 
\[ {\mathcal K}_{G(r,1)}(q,t)= 1+ \int K_{G(r,1)}(q,t,\frac{\partial}{\partial t}) =1+t+ \sum_{n\geq 2, \; \MS \ weak}P(S)  \frac{t^{n}}{n!}=1+ t+\sum_{n\geq 2} Poin^w(Y_{G(r,1,n)})(q)  \frac{t^{n}}{n!}\]
where \(\MS \ weak\) means  that we are considering only the nested sets made by weak subspaces, and  \(Poin^w(Y_{G(r,1,n)})(q) \) is the contribution to the Poincar\'e  polynomial of the model \(Y_{G(r,1,n)}\) given by the basis monomials whose nested set is `weak'.

Then we observe that the difference between the series  \(\gamma_{G(r,1)}(q,t,z)\) and \(K_{G(r,1)}(q,t,z)\) consists only in the first exponential factor, where we find \(\sum_{i\geq 2}\frac{t^{i-1}}{(i-1)!}q[i-1]_q\) instead than \(\sum_{i\geq 2}\frac{t^{i-1}}{(i-1)!}\): the \(q\)-polynomial \(q[i-1]_q\) counts the contribution to the Poincar\`e series given by a strong vertex which covers \(i\) weak vertices in the graph.
The evaluation of \(z\) as \(\frac{\partial}{\partial t}\) and the integral transform \(\gamma_{G(r,1)}(q,t,z)\) into \(\Gamma_{G(r,1)}(q,t)\), that gives the correct contribution to the Poincar\'e series of a strong vertex and of the weak subgraph stemming from it.

Since the strong vertices are linearly ordered, if there are \(m\) strong vertices their contribution is given by  \(\Gamma_{G(r,1)}(q,t)^m\), so the  total contribution of strong vertices to \(\Phi_{G(r,1)}(q,t)\) is  
\[\Gamma_{G(r,1)}(q,t)+ \Gamma_{G(r,1)}(q,t)^2+\Gamma_{G(r,1)}(q,t)^3+...\]
Multiplying by \({\mathcal K}_{G(r,1)}(q,t)\) we take into account the contributions of the components of the forest that don't have  strong vertices, and this proves our  claim.

\end{proof}

\subsection{Series for \(G=G(r,r,n)\)}
\label{section:seriesgrrn}
\begin{teo}
\label{teog(rrn)}
When \(r\geq 3\), the  series \(\Phi_{G(r,r)}(q,t)\) coincides with  \(\Phi_{G(r,1)}(q,t)\).
When \(r=2\) (i.e. the \(D_n\) case),  we have

\[\Phi_{G(2,2)}(q,t)=\frac{(1-q\frac{t^2}{2})}{1-\Gamma_{G(2,1)}(q,t)}{\mathcal K}_{G(2,1)}(q,t)\]

\end{teo}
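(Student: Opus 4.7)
The plan is to compare the Yuzvinsky bases of $H^*(Y_{G(r,1,n)},\Z)$ and $H^*(Y_{G(r,r,n)},\Z)$ directly via Theorem~\ref{base coomologia}, tracking which subspaces in $\Fc_{G(r,1,n)}\setminus\Fc_{G(r,r,n)}$ can actually appear in the support of an admissible monomial, and how the nested-set condition changes when they are removed from the building set. Recall that $\Fc_{G(r,r,n)}$ is obtained from $\Fc_{G(r,1,n)}$ by deleting the one-dimensional strong subspaces $\overline{H}_i$ (for any $r\geq 2$) and, only in the case $r=2$, also the two-dimensional strong subspaces $\overline{H}_{i,j}$.

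The first step, valid in both cases, is that $\overline{H}_i$ can never appear in the support of an admissible monomial: since $\dim\overline{H}_i=1$ and no element of the building set is strictly contained in the line $\overline{H}_i$, one has $\supp(f)_{\overline{H}_i}=\emptyset$, so $d_{\supp(f)_{\overline{H}_i},\overline{H}_i}=1$, forcing $f(\overline{H}_i)=0$. A short check also rules out that removing the $\overline{H}_i$ creates new nested families among the surviving subspaces: the sum of a pairwise non-comparable collection in $\Fc_{G(r,r,n)}$ cannot equal a line $\overline{H}_j$, since each summand would have to coincide with $\overline{H}_j$, contradicting non-comparability. Thus for $r\geq 3$ the Yuzvinsky bases of $Y_{G(r,1,n)}$ and $Y_{G(r,r,n)}$ coincide and $\Phi_{G(r,r)}=\Phi_{G(r,1)}$.

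For $r=2$ the new phenomenon is the role of $\overline{H}_{i,j}$. I would establish the following structural facts for any admissible $f$ with $\overline{H}_{i,j}\in\supp(f)$: (a) one has $f(\overline{H}_{i,j})=1$ and no element of $\supp(f)$ is strictly contained in $\overline{H}_{i,j}$, because the only building-set subspaces of the plane $\overline{H}_{i,j}$ are the four lines $\overline{H}_i,\overline{H}_j,H_{i,j}(0),H_{i,j}(1)$, none of which is in any admissible support; (b) no weak subspace $W\in\supp(f)$ has $i$ or $j$ in its underlying set, since such $W$ would be non-comparable to $\overline{H}_{i,j}$ and $W+\overline{H}_{i,j}$ would be the strong subspace indexed by $\{0,i,j\}\cup\text{underlying}(W)$, which lies in $\Fc_{G(2,1,n)}$ and violates the nested condition; (c) the strong subspace $\overline{H}_{i,j,k}$ cannot be in $\supp(f)$ either, because together with (b) we get $\supp(f)_{\overline{H}_{i,j,k}}=\{\overline{H}_{i,j}\}$, hence $d=3-2=1$, forbidden. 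A parallel argument shows that no new admissible monomial appears on the $\Fc_{G(2,2,n)}$ side: a pairwise non-comparable family in $\Fc_{G(2,2,n)}$ with sum equal to $\overline{H}_{i,j}$ must be a subset of $\{H_{i,j}(0),H_{i,j}(1)\}$, whose elements are one-dimensional and hence again never in any support.

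The crux is then a bijection between admissible monomials $m$ of $Y_{G(2,1,n)}$ with $\overline{H}_{i,j}\in\supp(m)$ and pairs $(\{i,j\},m')$, where $\{i,j\}\subset\{1,\ldots,n\}$ and $m'$ is an admissible monomial of $Y_{G(2,1,n-2)}$ on the ground set $\{1,\ldots,n\}\setminus\{i,j\}$. The map drops the factor $c_{\overline{H}_{i,j}}$, replaces each remaining strong factor $c_{\overline{H}_{\{i,j\}\cup T}}^{e}$ by $c_{\overline{H}_{T}}^{e}$ (with $|T|\geq 2$ by (c), so the image is a legitimate subspace in $\Fc_{G(2,1,n-2)}$), and leaves all weak factors untouched because by (b) their underlying sets are disjoint from $\{i,j\}$. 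The main technical point to verify is that admissibility is preserved: under the replacement, both $\dim A$ and $\dim\sum_{C\in\supp(f)_A}C$ drop by exactly $2$ (the dimension of $\overline{H}_{i,j}$), so every value $d_{\supp,A}$ is unchanged, and the admissibility inequalities on the surviving weak and strong subspaces match those in $Y_{G(2,1,n-2)}$. Converting this bijection into generating functions gives
\[
\Phi_{G(2,1)}(q,t)-\Phi_{G(2,2)}(q,t)\;=\;q\sum_{n\geq 2}\binom{n}{2}\,\mathrm{Poin}(Y_{G(2,1,n-2)})(q)\,\frac{t^n}{n!}\;=\;q\,\frac{t^2}{2}\,\Phi_{G(2,1)}(q,t),
\]
and combining with Theorem~\ref{teog(r1n)} yields the stated formula for $\Phi_{G(2,2)}$.
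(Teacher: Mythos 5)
Your proof is correct and reaches the theorem by a genuinely different route from the paper's. The paper stays entirely inside the generating-function framework of Theorem \ref{teog(r1n)}: it observes that the monomials lost in passing from \(\Fc_{G(2,1,n)}\) to \(\Fc_{G(2,2,n)}\) are exactly those whose forest has a two-dimensional \(\overline{H}_{i,j}\) as its lowest strong vertex, and reads off their contribution as \(\frac{q t^2/2}{1-\Gamma_{G(2,1)}}\,{\mathcal K}_{G(2,1)}\) directly from the chain decomposition of strong vertices. You instead prove the equivalent identity \(\Phi_{G(2,1)}-\Phi_{G(2,2)}=q\frac{t^2}{2}\Phi_{G(2,1)}\) by a deletion--contraction style bijection at the level of the Yuzvinsky basis: your structural facts (a), (b), (c) are all correct (and your argument for \(r\geq 3\), which the paper only asserts, is a welcome addition), they force the rigid shape of any admissible monomial containing \(\overline{H}_{i,j}\), and the map dropping \(c_{\overline{H}_{i,j}}\) and contracting \(\{i,j\}\) out of the remaining strong factors does preserve all the numbers \(d_{\supp(f)_A,A}\), since the relevant sums are direct sums and only the unique strong summand changes dimension, by exactly \(2\). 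Your check that no new admissible monomials appear on the \(\Fc_{G(2,2,n)}\) side is also the right point to address and is handled correctly. What your approach buys is independence from the internal structure of \(\Gamma\) and \({\mathcal K}\) (the factorization \(\Phi_{G(2,2)}=(1-q\frac{t^2}{2})\Phi_{G(2,1)}\) is proved directly from the cohomology bases, and I verified it agrees with hand computations at \(n=2,3,4\)); what it costs is the extra work of verifying that nestedness and admissibility are preserved in both directions of the bijection, which you state but do not fully check. One small point you should make explicit: your final identity requires the conventions \(\mathrm{Poin}(Y_{G(2,1,0)})=\mathrm{Poin}(Y_{G(2,1,1)})=1\), i.e.\ \(\Phi_{G(2,1)}=1+t+\sum_{n\geq 2}\mathrm{Poin}(Y_{G(2,1,n)})\frac{t^n}{n!}\); this is what the formula \(\frac{1}{1-\Gamma_{G(2,1)}}{\mathcal K}_{G(2,1)}\) of Theorem \ref{teog(r1n)} actually produces (note the term \(t\) coming from \({\mathcal K}\)), even though the definition of \(\Phi\) at the start of Section \ref{sec:series} starts the sum at \(n\geq 2\), so the caveat applies equally to the paper's own statement.
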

\begin{proof}
When \(r=2\), the only modification we have  with respect to the computation of Theorem \ref{teog(r1n)} is that among the forests that represent the supports of the  monomials we do not have forests whose lower strong vertex corresponds to a two dimensional subspace  \(\overline{H}_{i,j}\). The contribution to  \(\Phi_{G(2,1)}(q,t)\) of the associated  monomials  is computed by the series 
 \[\frac{q\frac{t^2}{2}}{1-\Gamma_{G(2,1)}(q,t)}{\mathcal K}_{G(2,1)}(q,t)\]
so we have to subtract it from \(\Phi_{G(2,1)}(q,t)\).
\end{proof}

\section{Real spherical models}
\label{sec:spherical}

When the group \(G\) is real we can construct   a {\em spherical model} associated to it, as a special case of a construction in   \cite{gaimrn}. 

Let us  consider the real arrangement \(\A_G\) given in the  euclidean space \(V\) by the reflecting hyperplanes of  \(G\), and let us   identify \(V^*\) with \(V\) by the scalar product.
Therefore the subspaces in the minimal building set, that we denote by  \(\Fc_{G}\),  are identified with subspaces in \(V\).
We will denote by \(\emme(G)\) the complement of the arrangement \(\A_G\) in \(V\).

Moreover, we denote  by $S(V)$ the  unit sphere in $V$, and, for 
every subspace $A\subset V$, let $S(A)=A\cap S(V)$. Let us consider 
the compact manifold 
\begin{displaymath}
	K=S(V)\times\prod _{A\in \Fc_{G}}S(A^{})
\end{displaymath}
There is an open embedding 
\(
\phi:\: 	\emme(G)/\R^+\longrightarrow K
\)
which  is obtained as a composition of the 
section $s:\:\emme(G)/\R^+ \mapsto \emme(G)$  
\begin{displaymath}
	s([p])=\frac{p}{|p|}\in S(V)\cap \emme(G)
\end{displaymath} 
with the map
\begin{displaymath}
 	\emme(G)\mapsto S(V)\times\prod _{A\in \Fc_{G}}S(A ^{})
\end{displaymath}
that is given by the normalization on the first factor and by the orthogonal projections followed by normalizations on the other factors.  
\begin{defin} 
We denote by $CY_{G}$ the closure in $K$ of $\phi(\emme(G)/\R^+)$.
\end{defin}
It turns out 
%(see \cite{gaimrn0}) 
that 
$CY_{G}$ is a smooth manifold with corners that  has as many  connected components as the number of chambers of the arrangement.

A linear realization of $CY_{G}$ as a disjoint union of polytopes has been constructed in \cite{gaiffipermutonestoedra}. The polytopes involved lie inside  the chambers of the arrangement: in the fundamental chamber we find the graph associahedron  \(P_G\) of type \(G\) (i.e. the graph polytope defined in  \cite{devadoss} and \cite{carrdevadoss} associated with the Dynkin graph of type \(G\)) and in the other chambers there is the image of this polytope via the action of \(G\).
The faces of the graph  associahedron \(P_G\) that lies in the fundamental chamber are parametrized by  the {\em fundamental nested sets} defined as follows. Given a basis of simple roots for the root system of \(G\), the {\em fundamental building set of irreducibles} is the set of irreducible subspaces that are spanned by  simple roots. The nested sets associated to this building set are the fundamental nested sets.

Now, as it is well known, the chambers of the arrangement are in bijection with the elements of \(G\). It follows that  the faces of the graph associahedra that are the components of   $CY_{G}$  are parametrized by pairs \((w,\Sc')\) where \(w\in G\) and \(\Sc'\) is a fundamental nested set. 

From Section 5 of \cite{gaimrn} we know that there  is a natural projection map \({\mathcal P}\) from $CY_{G}$ to \({\overline Y}_{\Fc_{G}}\), the real compact wonderful model associated with the group \(G\). This model is constructed in \cite{DCP1} as the closure of the image of the map 
\[\Proj(\emme(G)) \longrightarrow \Proj(V)\times\prod _{A\in \Fc_{G}}\Proj(A^{})\]
where we identify  the euclidean space \(V\) with its dual as above.

Then Theorem 5.2 of \cite{gaimrn} states that \({\mathcal P}\)  is \(2^{k+1}\rightarrow 1\) when restricted to  the open parts of the boundary components of codimension \(k\) of $CY_{G}$. Therefore \({\overline Y}_{\Fc_{G}}\) can be obtained by glueing  \(|G|\) copies of  the polytope \(P_G\) in a prescribed way.

\section{Counting faces of polytopes in the real cases}
\label{Eulercomputation}

A variant of the computation of the Poincar\'e series of the model \(Y_G\) allows us to find  a series that counts the number of faces of \(CY_G\). From this one immediately obtains formulas for the number of faces of the graph associahedron \(P_G\).

\subsection{Case \(A_{n-1}\), group \(S_n=G(1,1,n)\).}
When \(G=S_n\), i.e. when we are dealing with the arrangement of type \(A_{n-1}\), the polytope \(P_G\) is a \(n-2\) dimensional Stasheff's associahedron.  The entries of the \(f-\)vectors of these associahedra are the well known Kirkman-Cayley numbers.  Anyway,  in order  to `test' our method, in this section we show how we can obtain a  series that encodes all the information about the \(f\)-vectors of the Stasheff's associahedra. 

First we notice that every  nested set in \(\Fc_{G(1,1,n)}\) is represented by a tree on \(n\) leaves, as we explained in Section \ref{classicbraid}. Let us now consider planar pictures of these trees, adding an ordering condition on the leaves: even if two trees represent the same nested set, we consider them as different objects if the order from left to right of their leaves is different. 
Each of these plane trees represents a face of  $CY_{G(1,1,n)}$: the plane trees where    the leaves,  from left to right, form the list  \(1,2,..,n\) represent the fundamental nested sets, so  the trees whose leaves form the list  \(w(1),w(2),...,w(n)\)  (where \(w\in S_n\))  describe the faces \((w,\Sc')\) of the polytope that  lies in the chamber corresponding to \(w\). 

In conclusion, the faces of the disjoint union of polytopes $CY_{G(1,1,n)}$ are in bijective correspondence with the set of the above described  {\em plane rooted  trees on \(n\) ordered leaves}.

\begin{defin}\label{conto}
Let us denote by \(F(z, t)\) the series

\begin{equation} 
F(z, t)= \sum_{n\geq 2} \sum_{T}z^{s(T)}\frac{t^{n+s(T)-1}}{(n+s(T)-1)!}
\end{equation}
where $T$ ranges over all the  above described   plane rooted trees  with $n$  ordered leaves and  $s(T)$ is the number of internal vertices in  $T$.
\end{defin}

We observe that we can write 
\[F(z, t)= \sum_{n\geq 2, 1\leq s\leq n-1} \frac{C_{n,s}}{(n+s-1)!}z^{s}t^{n+s-1}\]
where $C_{n,s}$ is the number of $(s-1)$-codimensional faces of $CY_{G(1,1,n)}$. Therefore,   the coefficient in \(F(z,t)\) of \(z^st^{n+s-1}\) multiplied by 
\(\frac{(n+s-1)!}{n!}\) is equal to the number of $(s-1)$-codimensional faces of the \(n-2\) dimensional Stasheff's associahedron, that is the Kirkman-Cayley number
\[D_{n+1,s-1}= \frac{1}{s}\binom{n-2}{s-1}\binom{n+s-1}{s-1}\]
\begin{prop}\label{serie}
We have \(F(z,t)=e^{z\frac{t^{2}}{1-t}}-1\). 
\end{prop}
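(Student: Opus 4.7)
The plan is to use an ``ordered'' analogue of the bijection from \cite{gaifficayleynumbers} employed in the proof of Theorem \ref{prop:seriesoloweak}. I would encode each plane rooted tree on $n$ ordered leaves with $s$ internal vertices as a partition of $\{1,\ldots,n+s-1\}$ into $s$ parts of cardinality $\geq 2$, with the extra feature that each part is \emph{internally ordered} (a sequence rather than a set). Once this bijection is in place, the claimed formula drops out of the exponential formula for labelled structures applied to the species of sequences of length $\geq 2$.

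First I would describe the bijection explicitly. Given a plane tree $T$ with $n$ leaves (carrying a permutation of $\{1,\ldots,n\}$) and $s$ internal vertices, label the internal vertices level by level following the rule recalled in the proof of Theorem \ref{prop:seriesoloweak}: the label of a vertex $v$ is smaller than that of $w$ iff the subtree below $v$ contains a leaf whose label is smaller than all leaves in the subtree below $w$. Labels $n+1,\ldots,n+s$ are used, with the root receiving $n+s$. To $T$ we associate the set partition of $\{1,\ldots,n+s-1\}$ whose parts are, for each internal vertex $v$, the sequence of labels of the children of $v$ taken in left-to-right order. Since each internal vertex has at least two children and the root's label never appears as a child, this yields a partition into $s$ internally ordered parts of size $\geq 2$. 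The inverse reconstruction is the same as in \cite{gaifficayleynumbers}, now reading the plane embedding off the internal ordering of each block.

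Next I would run the generating-function calculation. The exponential generating function for a single internally ordered block of size $\geq 2$ is
\[
\sum_{k\geq 2} k!\,\frac{t^k}{k!} \;=\; \sum_{k\geq 2} t^k \;=\; \frac{t^2}{1-t},
\]
since on an underlying $k$-set there are $k!$ sequences. By the exponential formula, with $z$ marking the number of parts,
\[
\sum_{N\geq 0}\sum_{\Pi}\, z^{|\Pi|}\,\frac{t^N}{N!} \;=\; e^{\,z\,\frac{t^2}{1-t}},
\]
where $\Pi$ ranges over partitions of $\{1,\ldots,N\}$ into internally ordered parts of size $\geq 2$. By the bijection above, the substitution $N=n+s-1$, $|\Pi|=s$ matches term by term the definition of $F(z,t)$, and the constant term $1$ on the right corresponds to the empty partition, which has no counterpart in the sum defining $F(z,t)$ (which starts at $n\geq 2$). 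Subtracting it gives $F(z,t)=e^{z\,\frac{t^2}{1-t}}-1$.

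The main technical point is to check that the plane structure of $T$ is captured faithfully by, and recoverable from, the internal ordering of the blocks. Concretely, one must verify that the canonical level-by-level labelling, when fed to the inverse algorithm of \cite{gaifficayleynumbers}, assembles the children of each internal vertex in precisely their prescribed left-to-right plane order, so that distinct plane trees (including those differing only in the permutation assigned to the leaves) are mapped to distinct ordered partitions. This is the one step whose details go beyond the unordered version used earlier in the paper.
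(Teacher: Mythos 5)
Your proposal is correct and follows essentially the same route as the paper: both use the ordered variant of the bijection from \cite{gaifficayleynumbers} to identify plane rooted trees on $n$ ordered leaves with internally ordered partitions of $\{1,\ldots,n+s-1\}$ into parts of size $\geq 2$, and then obtain the formula from the exponential generating function $\sum_{k\geq 2}t^k=\frac{t^2}{1-t}$ for a single ordered block. Your explicit description of the blocks as the left-to-right sequences of children of each internal vertex matches the paper's Figure~\ref{orderedtree}, and the generating-function step coincides with the paper's term-by-term contribution count.
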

\begin{proof}

As in the proof  of Theorem \ref{prop:seriesoloweak} we want to count partitions of \(\{1,...,n+k-1\}\)  into \(k\) parts with cardinality \(\geq 2\) instead than plane trees with \(n\) ordered leaves and \(k\) internal vertices.
An important difference is that here the order of the leaves  is relevant; this can be translated in terms of partitions by considering partitions where each part is internally ordered. In fact an ``ordered''  variant of the bijection described in Section \ref{sec:series} (see Theorem  2.2 of \cite{gaifficayleynumbers}) associates to  such a partition  a tree on \(n\) ordered leaves, as illustrated by Figure \ref{orderedtree}. 

\begin{figure}[h]
 
 \center
\includegraphics[scale=0.6]{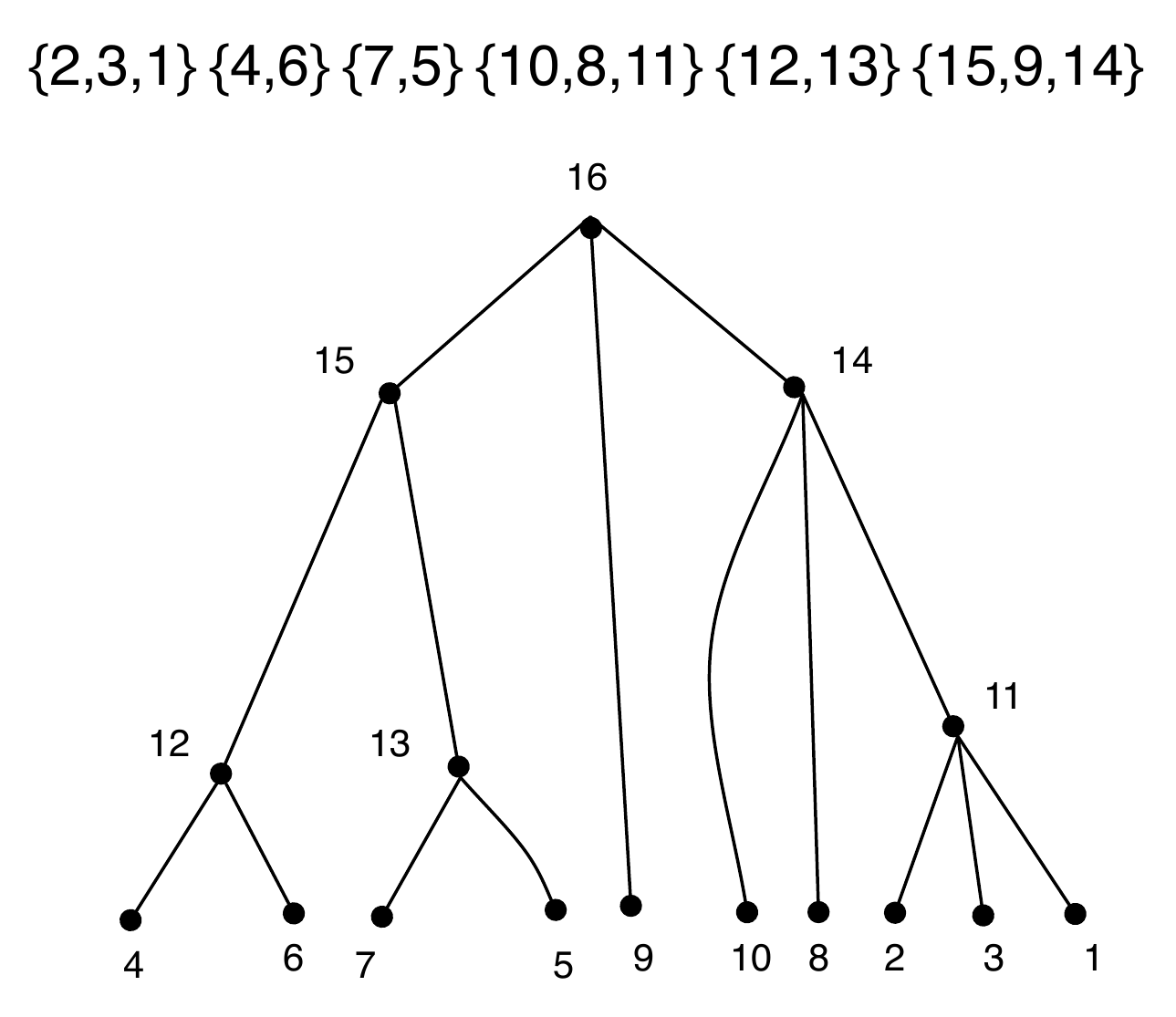}
\caption{The  internally ordered  partition of \([1,15]\) into 6 blocks  that is on top of the picture (the internal orderings  are obtained reading from left to right), produces the  rooted tree with 10 ordered leaves on the bottom of the picture. For every internal vertex \(v\), the labels of the vertices and leaves covered by it, read  from left to right, reproduce the internal orderings of a set of the partition. }
\label{orderedtree}
\end{figure}

 Furthermore, in this case all the parts have exponent equal to 1 (in particular there is no part with exponent 0), and the parts  have cardinality \(\geq 2\).

When  we single out the contribution given to \(F(z,t)\) by all the parts  represented by subsets with cardinality \(i\geq 2\)  we observe that 
  if in a  partition there is only one such part  its contribution is \(zt^i\) and if there are \(j\) such parts their  contribution is \(z^j \frac{(t^i)^j}{j!}\). 
  The formula follows.

\end{proof}
Let us now consider the series 
\[X(t)=\sum_{n\geq 2}\chi({\overline Y}_{G(1,1,n)})\frac{ t^{n-1}}{(n-1)!}\]
where \(\chi( \quad )\) denotes the Euler characteristic. There are several different ways to compute this series 
%(one for instance is evaluating in \(q=-1\) and \(z=\frac{\partial}{\partial t}\) the series \(\Psi(q,z,t)\) of Theorem \ref{teo:formulapoicareminimale}), 
and a closed formula for \(\chi({\overline Y}_{\mathcal{F}_{G(1,1,n)}})\) has been provided in \cite{devadosstessellation} and \cite{hendersonrains}. Anyway we remark that we can compute \(X(t)\) also from \(F(z,t)\):

\begin{cor}
The series \(X(t)\) can be obtained from the series \(e^{\frac{z}{2}\frac{t^{2}}{1+t}}-1\) by evaluating \(z\) as \(\frac{\partial}{\partial t}\).
\end{cor}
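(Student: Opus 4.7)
The plan is to recognize the proposed series as $F(z/2,-t)$, obtained from Proposition~\ref{serie} by substituting $z\mapsto z/2$ and $t\mapsto -t$. Expanding via Definition~\ref{conto},
\[
e^{\frac{z}{2}\frac{t^2}{1+t}}-1 \;=\; \sum_{n\geq 2}\sum_{s=1}^{n-1} \frac{(-1)^{n+s-1}}{2^s}\,\frac{C_{n,s}}{(n+s-1)!}\,z^s\,t^{n+s-1}.
\]
After evaluating $z$ as $\partial/\partial t$ in the sense of Remark~\ref{rem:ordinederivata} and using the identity $(\partial/\partial t)^s[t^{n+s-1}/(n+s-1)!]=t^{n-1}/(n-1)!$, this collapses to
\[
\sum_{n\geq 2}\Biggl(\sum_{s=1}^{n-1}(-1)^{n+s-1}\frac{C_{n,s}}{2^s}\Biggr)\frac{t^{n-1}}{(n-1)!}.
\]

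To identify this expression with $X(t)$, I would next compute $\chi(\overline{Y}_{\mathcal{F}_{G(1,1,n)}})$ by pushing the polytope decomposition of $CY_{G(1,1,n)}$ forward along the projection $\mathcal{P}:CY_{G(1,1,n)}\to\overline{Y}_{\mathcal{F}_{G(1,1,n)}}$ recalled in Section~\ref{sec:spherical}. Since $\mathcal{P}$ is $2^s$-to-one on the open parts of the codimension $s-1$ boundary components (Theorem~5.2 of \cite{gaimrn}), the induced CW decomposition of $\overline{Y}$ has exactly $C_{n,s}/2^s$ open cells of dimension $n-1-s$, which gives
\[
\chi(\overline{Y}_{\mathcal{F}_{G(1,1,n)}}) \;=\; \sum_{s=1}^{n-1}(-1)^{n-1-s}\,\frac{C_{n,s}}{2^s}.
\]

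Since $(-1)^{n+s-1}=(-1)^{n-1-s}(-1)^{2s}=(-1)^{n-1-s}$, the coefficient of $t^{n-1}/(n-1)!$ obtained in the first step coincides with $\chi(\overline{Y}_{\mathcal{F}_{G(1,1,n)}})$, which is the defining coefficient of $X(t)$, and the corollary follows. The main (minor) obstacle is verifying that the pushforward CW structure on $\overline{Y}$ has the claimed cell counts in each codimension, but this is immediate from Theorem~5.2 of \cite{gaimrn} together with the identification, established in the proof of Proposition~\ref{serie}, of the faces of $CY_{G(1,1,n)}$ with plane rooted trees on $n$ ordered leaves.
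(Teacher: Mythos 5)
Your proposal is correct and follows essentially the same route as the paper: recognize the series as $F(\tfrac{z}{2},-t)$, use the $2^{k+1}\rightarrow 1$ property of the projection $\mathcal{P}$ on codimension-$k$ strata to count the cells of the induced CW decomposition of $\overline{Y}_{\mathcal{F}_{G(1,1,n)}}$, and match signs after the substitution $z\mapsto \partial/\partial t$. You merely spell out the coefficient extraction and the sign check $(-1)^{n+s-1}=(-1)^{n-1-s}$ more explicitly than the paper does.
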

\begin{proof}
We can compute the Euler characteristic by counting the faces of the \(CW\) complex that covers \({\overline Y}_{\mathcal{F}_{\mathcal{A}_{n-1}}}\) according to the map \({\mathcal P}\) described in the end of Section \ref{sec:spherical}.
Then  \(e^{\frac{z}{2}\frac{t^{2}}{1+t}}-1\) is \(F(\frac{z}{2},-t)\): the substitution \(z\rightarrow \frac{z}{2}\) in \(F(z,t)\) takes into account the \(2^{k +1}\rightarrow 1\) glueings of  the \(k\)-codimensional boundary components of $CY_{G(1,1,n)}$, while the substitution \(t\rightarrow -t\) gives the correct sign \((-1)^{n-1+s}(=(-1)^{n-1-s})\) to the \((s-1)\)-codimensional cells of the \(CW\) complex.

\end{proof}
\subsection{Cases \(B_n\), group \(G(2,1,n)\) and \(D_n\), group \(G(2,2,n)\).}
Let us consider \(a=1\) or \(a=2\) and define the series 
\[F_{CY_{G(2,a)}}(w,t)=\sum_n\sum_{1\leq j\leq n} C_{G(2,1),j}w^j\frac{t^n}{n!}\]

where $C_{G(2,a,n),j}$ is the number of $(j-1)$-codimensional faces of $CY_{G(2,a,n)}$.
In view of the description of fundamental nested sets, and of the parametrization by pairs \((w,\Sc')\) of the faces of $CY_{G(2,a,n)}$,
 this  number coincides with the number of  {\em plane rooted weighted trees on \(n+1\)  leaves labelled by \(\{0,1,...,n\}\) and ordered from left to right} with the following properties (see Figure \ref{grafopesato222}):
\begin{itemize}

\item the vertices may be weak or strong; the root is a strong vertex, and  the strong vertices are linearly ordered;
\item the leaves belong to a horizontal  line; the first leaf on the left is the leaf 0, that is contained in the subgraph that stems from every strong vertex; the other leaves are put on the line in any order from left to right;

\item there are at least two edges that go down  from  every internal vertex; in the \(D_n\) case, i.e. if \(a=2\),  there must be at least four  edges that go down from  the lower strong vertex. 

\item there are weights 0 or 1 attached to the leaves and vertices, with the following restrictions: the root has weight 0 and when a vertex covers some other objects (where with  `object' we mean a vertex or a leaf), the leftmost object has weight 0. Furthermore, let us say that a leaf has parity 0 or 1 if the sum (modulo 2) of the weights that one finds in the path from the leaf to the root is  0 or 1 respectively; then, in the \(D_n\) case   the number of leaves with parity 1 must be even.
\item in the \(D_n\) case, the first leaf on the right of the leaf 0  may be  equipped with and  extra label (\(+\), \(-\) or \(\pm\)). More precisely, it has this  extra label if it is covered by a weak vertex,  with the following meaning: let \(c\) be the label of the leaf on the right of \(b\), then if in the subspace represented by the weak vertex there is the root \(x_c+x_b\) we put the label \(+\), if there is the root \(x_c-x_b\) we put the label \(-\). We also allow the following special case: there is a weak vertex  that covers exactly two leaves, namely the leaf \(b\) with the extra label \(\pm\)  and the leaf \(c\);  this notation means that in the nested set there are both the one dimensional subspaces spanned by the roots  \(x_c+x_b\) and \(x_c-x_b\).
\end{itemize}

 \begin{figure}[h]
 
 \center
\includegraphics[scale=0.6]{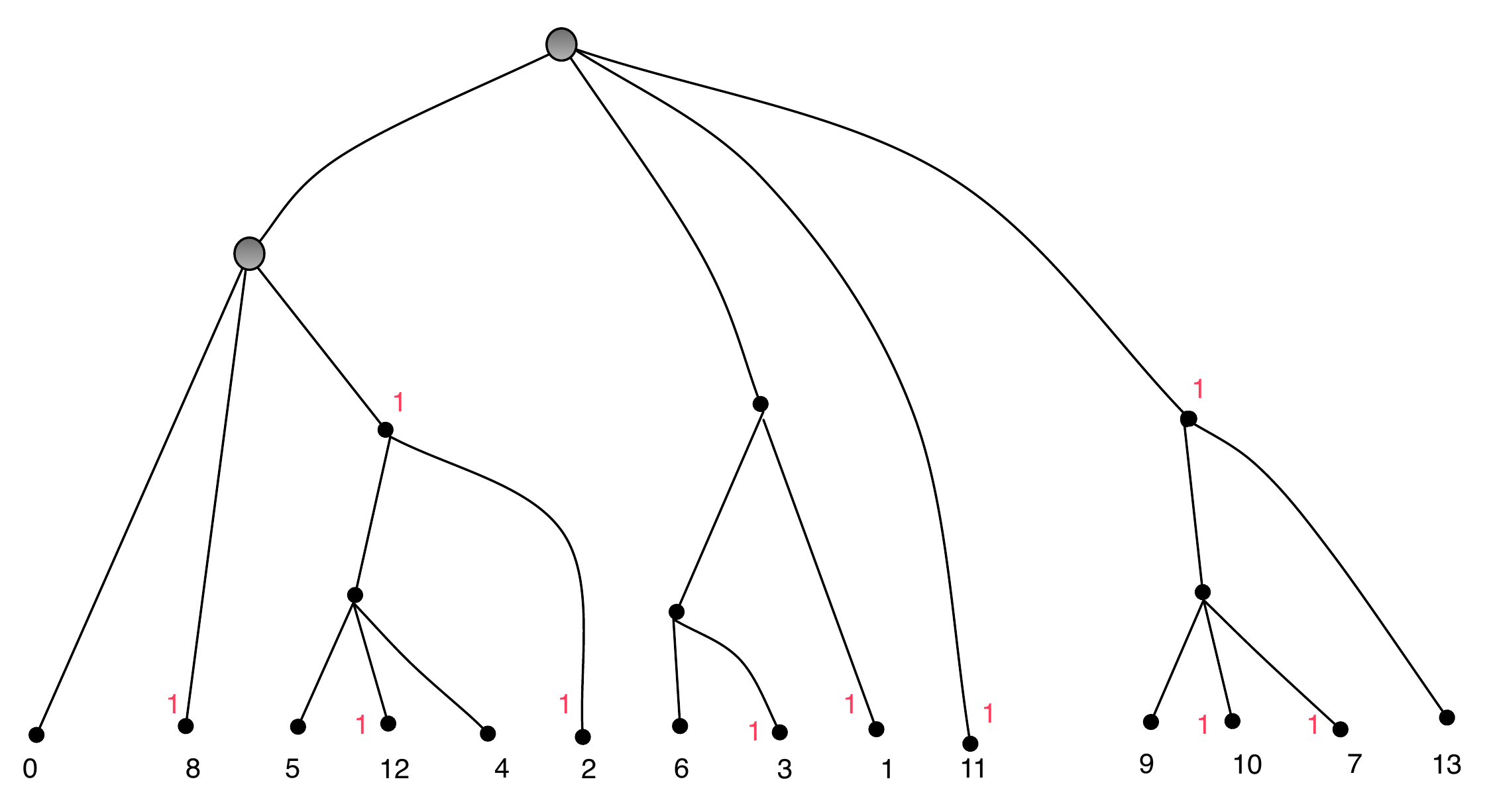}
\caption{A valid plane oriented rooted tree that represents a face of  codimension 7 of \(CY_{G(2,1,13)}\).}
\label{grafopesato222}
\end{figure}

\begin{rem}We notice that the combined information provided by the ordering from left to right of the leaves  and by the weights  of these trees  determines the chamber of the arrangement that we are considering, since the chambers are in bijective correspondence with the orderings of the numbers \(\{1,2,...,n\}\),
with weights \(0\) or \(1\) attached to these numbers (these weights are recovered from the weights of a tree by attaching to a  leaf its parity  0 or 1).
\end{rem}

When \(a=1\), i.e. in the \(B_n\) case,  the associated polytope is again the   \((n-1)\)-dimensional Stasheff's associahedron whose \(f\)-vector is well known.  Nevertheless we show a formula for   \(F_{CY_{G(2,1)}}(w,t)\) from which we will   obtain  a formula for \(F_{CY_{G(2,2)}}(w,t)\).  We remark that since   \(CY_{G(2,2,n)}\), for  every \(n\), is the disjoint union of   \(2^{n-1}n!\) polytopes, this immediately gives a  formula  that computes the \(f\)-vectors of the graph associahedra of type  \(D_n\).

One defines 
\[{\widetilde \gamma}_{G(2,1)}(t,z,w)=2\left( \sum_{i\geq 1}i(2t)^{i-1}\right) \prod_{j\geq 2}e^{\frac{wz}{2}\left( 2t\right)^j}\]

This series plays the same role of the series \(\gamma_{G(r,1)}\) in Section \ref{section:seriesgr1n}: it counts the contribution to  \(F_{CY_{G(2,1)}}(w,t)\) of the  vertices of the weak subgraph  that stems from  a strong vertex (the leaves are considered weak except for the leaf 0). In this case, for a technical reason that will become clear later,  there are two variables, \(w\) and \(z\), that count the same quantity, i.e. the number of internal vertices of this subgraph.
As in the \(G(1,1,n)\) case, the order from left to right of the leaves is relevant and therefore we are considering partitions into internally ordered parts. This time there are also  weights, equal to 0 or 1,  attached to each number of the partition.

The factor \(2\left( \sum_{i\geq 1}i(2t)^{i-1}\right)\) has the following meaning: in a given partition there is a part, say of cardinality \(i\), that contains the highest number, and represents the vertices connected by an arc to the strong vertex; then the contribution of this part  is given by \(2^iit^{i-1}\), where the factor \(i\) counts   the choices of the position of the highest number.

To obtain the exact contribution of the  vertices of the weak subgraph  that lies below a strong vertex, we have to pass from \(\gamma_{G(r,1)}\) to the following series:
\[{\widetilde \Gamma}_{G(2,1)}(t,w)=\int {\widetilde \gamma}_{G(2,1)}(t,\frac{\partial}{\partial t},w)\]
where  the substitution of \(z\) with \(\frac{\partial}{\partial t}\) is performed as specified in Remark \ref{rem:ordinederivata}.
We observe that the variable \(w\) still counts the internal vertices of the subgraph.
\begin{teo}We have 
\[F_{CY_{G(2,1)}}(w,t)= \frac{1}{1-w{\widetilde \Gamma}_{G(2,1)}(t,w)}\]
\end{teo}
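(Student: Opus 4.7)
The plan is to follow the template of the proof of Theorem \ref{teog(r1n)}, with the variable $w$ now playing the role of codimension (contributing one power per vertex of the nested set) and $t^n/n!$ counting the $n+1$ labelled leaves via an EGF. First I would invoke the parametrization of Section \ref{sec:spherical}: faces of $CY_{G(2,1,n)}$ are indexed by pairs $(g,\mathcal{S}')$ with $g\in G(2,1,n)$ and $\mathcal{S}'$ a fundamental nested set, which by the description preceding the theorem correspond bijectively with the plane rooted weighted trees under consideration (the underlying tree records $\mathcal{S}'$, while the left-to-right ordering of the leaves together with their weights records $g$). Thus $F_{CY_{G(2,1)}}(w,t)$ is precisely the exponential generating function of those trees, weighted by $w$ to the number of internal vertices.

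Next I would slice each such tree along its chain of strong vertices $S_1\subsetneq\cdots\subsetneq S_m$ into $m$ independent blocks, where the $i$-th block consists of $S_i$ together with the weak subgraph that hangs from it (and not from $S_{i-1}$ when $i\geq 2$). The main technical step is to verify that each block contributes the factor $w\,\widetilde{\Gamma}_{G(2,1)}(t,w)$: the leading $w$ records the strong vertex itself, while the remainder encodes the weak subgraph through an adaptation of the ``partition with exponents'' bijection used in the proof of Theorem \ref{prop:seriesoloweak}. The three new features of the real spherical setting are: (i) the internal ordering of each part of the associated partition (the leaves sit on a horizontal line), which cancels the denominator $j!$ of the EGF; (ii) the weights in $\Z/2$ attached to each non-leftmost element, contributing $\tfrac{1}{2}(2t)^j$ per part of size $j$; (iii) the replacement of the $q$-polynomial $q[i-2]_q$ appearing in Theorem \ref{prop:seriesoloweak} by the single monomial $w$, since a strong vertex in the real setting contributes only one codimension unit rather than a full Poincar\'e polynomial. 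The prefactor $2\sum_{i\geq 1}i(2t)^{i-1}$ in $\widetilde{\gamma}_{G(2,1)}$ accounts for the distinguished ``top'' part directly covered by $S_i$, in which leaf $0$ (or, for $i\geq 2$, the subtree of $S_{i-1}$ containing it) may occupy any of $i$ positions from left to right, and which does not carry an exponent, exactly as in the proof of Theorem \ref{prop:seriesoloweak}. The substitution $z=\partial/\partial t$ and the subsequent integration in $t$ then perform the same reindexing trick as in Theorem \ref{teog(r1n)}, converting partition-based bookkeeping into the leaf-based bookkeeping used by $F_{CY_{G(2,1)}}$.

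Once each block is shown to contribute $w\,\widetilde{\Gamma}_{G(2,1)}(t,w)$, the independence of the blocks combined with the linear structure of the chain of strong vertices yields the geometric series
\begin{equation*}
\sum_{m\geq 0}\bigl(w\,\widetilde{\Gamma}_{G(2,1)}(t,w)\bigr)^m=\frac{1}{1-w\,\widetilde{\Gamma}_{G(2,1)}(t,w)},
\end{equation*}
which is well defined as a formal power series because $w\,\widetilde{\Gamma}_{G(2,1)}(t,w)$ has no constant term in $t$; the $m=0$ summand corresponds to the (conventionally included) empty tree. The step I expect to require the most care is the verification of the exact form of the prefactor $2\sum_{i\geq 1}i(2t)^{i-1}$: one has to track simultaneously the placement of leaf $0$, the zero-weight restriction on the leftmost object of each part, and the fact that the top part is not exponent-carrying, and to check that these ingredients combine into precisely that series rather than any of its close variants.
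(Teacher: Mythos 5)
Your proposal is correct and follows essentially the same route as the paper: the paper's own proof consists of the discussion preceding the theorem (which establishes that $w\,{\widetilde \Gamma}_{G(2,1)}(t,w)$ is exactly the contribution of one strong vertex together with the weak subgraph hanging from it, via the internally ordered weighted-partition encoding and the $z=\frac{\partial}{\partial t}$ reindexing) followed by the observation that the linear ordering of strong vertices yields the geometric series. Your write-up is in fact more explicit than the paper's two-sentence argument, and correctly identifies the prefactor $2\sum_{i\geq 1}i(2t)^{i-1}$ as the delicate bookkeeping step; the only blemish is a slip in item (iii), where the vertices whose $q[i-2]_q$ gets replaced by $w$ are the weak ones, not the strong one.
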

\begin{proof}
We have already discussed the meaning of the series \({\widetilde \Gamma}_{G(2,1)}(t,w)\). 
So, since the strong vertices are linearly ordered, the series \(F_{CY_{G(2,1)}}(w,t)\) is equal to 
\[w{\widetilde \Gamma}_{G(2,1)}(t,w)+(w{\widetilde \Gamma}_{G(2,1)}(t,w))^2+(w{\widetilde \Gamma}_{G(2,1)}(t,w))^3+...\]
\end{proof}

\begin{es}
\label{es:numeriBn}
A simple computation shows that the first terms of the series \(w{\widetilde \Gamma}_{G(2,1)}(t,w)\) are as follows:
\[w{\widetilde \Gamma}_{G(2,1)}(t,w)=w2t+(w^2+w)2^2t^2+(2w^3+3w^2+w)2^3t^3+(5w^4+10w^3+6w^2+w)2^4t^4+...\]
As a consequence, the first terms of \(F_{CY_{G(2,1)}}(w,t)\) are:
\[F_{CY_{G(2,1)}}(w,t)=1+w2t+(2w^2+w)2^2t^2+(5w^3+5w^2+w)2^3t^3+ (14 w^4+21 w^3+9w^2+w)2^4t^4+...\]
The coefficients that appear inside the parentheses   give, as expected, the  \(f\)-vectors of Stasheff's associahedra.
\end{es}

We can now quickly  obtain a formula in the \(D_n\) case
\begin{teo}We have 
\label{teo:politopodn}
\[F_{CY_{G(2,2)}}(w,t)= \frac{(1-t)w{\widetilde \Gamma}_{G(2,1)}(t,w)-2tw-2t^2w-2t^2w^2}{1-w{\widetilde \Gamma}_{G(2,1)}(t,w)}\]
In particular, when \(n\geq 4\), the coefficient of \(w^st^n\) of this series, divided by \(2^{n-1}\), gives the number of faces of codimension \(s-1\) of the  \((n-1)\)-dimensional associahedron of type \(D\). 
\end{teo}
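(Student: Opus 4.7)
The plan is to adapt the strategy of Theorem \ref{teog(rrn)} (which handled the analogous passage from the Poincar\'e series of $Y_{G(2,1,n)}$ to that of $Y_{G(2,2,n)}$) to the face-counting setting of Section \ref{Eulercomputation}. Starting from the $B_n$-type formula
\[F_{CY_{G(2,1)}}(w,t)=\frac{1}{1-w\tilde\Gamma_{G(2,1)}(t,w)},\]
one modifies the numerator so that the weighted plane rooted trees counted are exactly those representing faces of $CY_{G(2,2,n)}$.

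First I would isolate the two combinatorial differences between $B_n$ and $D_n$ trees, as listed in Section \ref{Eulercomputation}: (i) in $D_n$ the subspaces $\overline{H}_i$ and $\overline{H}_{i,j}$ are excluded from the building set, so the lower strong vertex must correspond to $\overline{H}_I$ with $|I|\geq 3$; (ii) in $D_n$ a weak vertex covering exactly two leaves $b,c$ admits the extra $\pm$-label on $b$, encoding the simultaneous presence in the nested set of the two $1$-dimensional weak subspaces $H_{b,c}(0)$ and $H_{b,c}(1)$ (this is forbidden in $B_n$ because there $\overline{H}_{b,c}\in \Fc_{G(2,1,n)}$ violates the nested-set axiom). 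The denominator $1-w\tilde\Gamma_{G(2,1)}(t,w)$ remains unchanged, because all strong vertices above the lowest one automatically satisfy $|I|\geq 3$ and hence lie in $\Fc_{G(2,2,n)}$.

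Next I would compute the corrections to the numerator. The three explicit subtractions come from the configurations in which the lowest strong vertex is an excluded subspace:
\begin{itemize}
\item $|I|=1$, i.e.\ the lower strong vertex is $\overline{H}_i$ (one leaf below the vertex with two weight choices), contributing $-2tw$;
\item $|I|=2$ with all-singleton partition of $I$ (two leaves directly below the strong vertex), contributing $-2t^{2}w$;
\item $|I|=2$ with a single weak block of size $2$ (one extra weak vertex covering both leaves, hence an extra factor $w$), contributing $-2t^{2}w^{2}$.
\end{itemize}
Finally, the factor $(1-t)$ multiplying $w\tilde\Gamma_{G(2,1)}$ encodes, after the combinatorial bookkeeping, the net effect of the $\pm$-configurations at the distinguished position (first leaf on the right of the leaf $0$): each $B_n$-tree where that leaf is covered by a weak vertex produces, in $D_n$, an additional face with one extra weak subspace recorded, and I expect that rewriting this bonus as a correction to the $\tilde\Gamma$ factor yields precisely the $-t\,w\tilde\Gamma_{G(2,1)}$ term.

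The last sentence of the statement then follows from the fact that, for $n\geq 4$, $CY_{G(2,2,n)}$ is the disjoint union of $|G(2,2,n)|=2^{n-1}n!$ isometric copies of the $D_n$-graph associahedron: the $n!$ is absorbed by the exponential-generating-series convention, leaving the prefactor $2^{n-1}$ as the divisor. The main obstacle I foresee is precisely the verification of the $(1-t)$ factor: the $\pm$-configurations must be matched with trees where the distinguished position is covered by a weak vertex, and one must check that after all cancellations their net generating-series contribution is exactly $-tw\tilde\Gamma_{G(2,1)}$, rather than a more complicated correction.
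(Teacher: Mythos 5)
Your overall strategy --- keep the denominator $1-w{\widetilde \Gamma}_{G(2,1)}(t,w)$ for the strong vertices above the lowest one and correct only the factor accounting for the lowest strong vertex --- is the paper's, and your identification of the two combinatorial differences between the $B_n$ and $D_n$ trees is correct. But the derivation of the numerator has a genuine gap, visible already in the numbers you assign to the excluded configurations. With the normalization of ${\widetilde \gamma}_{G(2,1)}$ (each leaf carries a factor $2t$), the lowest-strong-vertex configurations with $|I|=2$ contribute $2^2t^2w$ and $2^2t^2w^2$ to $w{\widetilde \Gamma}_{G(2,1)}(t,w)$, not $2t^2w$ and $2t^2w^2$: your three ``corrections'' are the final numerator terms read off backwards rather than the contributions of the configurations you name. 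The ingredient that reconciles the two, and which is entirely absent from your proposal, is the parity condition: in the $D_n$ case only the trees with an even number of parity-$1$ leaves are admissible (the $D_n$ chambers are half of the $B_n$ ones), and this forces a global factor $\tfrac{1}{2}$ on the whole lowest-vertex contribution. Without it the count is off by a factor of $2$.

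The second gap is the $(1-t)$ factor, which you explicitly leave unverified; it does not come from a clean ``one extra face per covered leaf $b$'' bonus but from a concrete three-step bookkeeping. The paper first doubles the truncated lowest-vertex contribution $w{\widetilde \Gamma}_{G(2,1)}(t,w)-2tw-2^2t^2(w+w^2)$ to account for the two labels $+$ and $-$; then subtracts $2t\,\bigl[w{\widetilde \Gamma}_{G(2,1)}(t,w)-2wt\bigr]$, the contribution of the trees in which the distinguished leaf $b$ is covered directly by the strong vertex and hence carries no extra label (the factor $2t$ is the leaf $b$ itself, the bracket is everything else); then adds $2^2t^2w^2$ for the special $\pm$ configuration; and finally halves everything for the parity condition. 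Expanding $\tfrac{1}{2}\bigl(2[w{\widetilde \Gamma}_{G(2,1)}(t,w)-2tw-2^2t^2(w+w^2)]-2t[w{\widetilde \Gamma}_{G(2,1)}(t,w)-2wt]+2^2t^2w^2\bigr)$ gives exactly $(1-t)w{\widetilde \Gamma}_{G(2,1)}(t,w)-2tw-2t^2w-2t^2w^2$, so the $-t\,w{\widetilde \Gamma}_{G(2,1)}(t,w)$ term you were hoping for only emerges from the interplay of the doubling, the ``leaf $b$ not covered by a weak vertex'' subtraction, and the halving, none of which appears in your argument. The closing sentence about dividing by $2^{n-1}$ is fine as stated.
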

\begin{proof}
A first difference with respect to the computation in the \(B_n\) case is that we have to remove all  the graphs such that the subgraph that stems from the lower strong vertex   has  only two or three leaves, namely the leaf 0 plus  one or two leaves. A subgraph  of this type that has  only two leaves   gives the contribution \(2wt\) to \({\widetilde \Gamma}_{G(2,1)}(t,w)\), while if it has  three   leaves it gives the contribution \(4(w+w^2)t^2\) to \({\widetilde \Gamma}_{G(2,1)}(t,w)\). 
%The contribution of the lower strong vertices is therefore expressed by:
%\[ w{\widetilde \Gamma}_{G(2,1)}(t,w)-2tw-2^2t^2(w+w^2)\]
Furthermore, we have to take into account the extra label \(+\), \(-\) or \(\pm\) of  the leaf on the right of the leaf \(0\) (we will call it the leaf \(b\)): this extra label appears only when the leaf \(b\) is covered by a weak vertex. So in order to compute the contributions of the extra labels \(+\) and \(-\) we  multiply by two the contribution of lower strong vertices computed so far:
\[2[w{\widetilde \Gamma}_{G(2,1)}(t,w)-2tw-2^2t^2(w+w^2)]\]
and then subtract from it the contribution of lower strong vertices where the leaf \(b\) is not covered by a weak vertex:
 \[ 2[w{\widetilde \Gamma}_{G(2,1)}(t,w)-2tw-2^2t^2(w+w^2)]-2t[w{\widetilde \Gamma}_{G(2,1)}(t,w)-2wt]\] 
Then we  add \(2^2t^2w^2\) to take into account the case where the extra label \(\pm\) appears.
Finally, we obtain the following formula that computes the contribution of lower strong vertices:
\[ \frac{1}{2}\left(2[w{\widetilde \Gamma}_{G(2,1)}(t,w)-2tw-2^2t^2(w+w^2)]-2t[w{\widetilde \Gamma}_{G(2,1)}(t,w)-2wt]+2^2t^2w^2 \right )\] 
  The factor \( \frac{1}{2}\) takes into account that in the \(D_n\) case   we are considering only the trees where an even number of leaves have parity 1.
  
The contribution of strong vertices different from the lower one is computed dividing the formula that we have obtained by \(1-w{\widetilde \Gamma}_{G(2,1)}(t,w) \). 
\end{proof}
\begin{es}
If one computes \(F_{CY_{G(2,2)}}(w,t)\) from the formula above, starting from the formula for the first terms of \(w{\widetilde \Gamma}_{G(2,1)}(t,w)\) shown in  Example \ref{es:numeriBn}, one obtains that the coefficient of \(2^3t^4\) in \(F_{CY_{G(2,2)}}(w,t)\) is equal to: 
\[  16w^4+24w^3+10w^2+w \]
The coefficients that appear above give the  \(f\) vector of the graph associahedron of type \(D_4\). We notice that a  formula for the number of the vertices  of these graph associahedra (in terms of the Catalan numbers) is provided by  Proposition 5.4 of \cite{postnikov}.

We notice that the coefficient of \(2^2t^3\) in \(F_{CY_{G(2,2)}}(w,t)\) is \(5w^3+5w^2+w\), giving the \(f\)-vector of the \(2\)-dimensional Stasheff's associahedron. This reflects the fact that the `degenerate' root system \(D_3\) is equal to \(A_3\).  
\end{es}
As a corollary of the results above, we can describe a series that computes the Euler characteristic of the real compact models of type \(B_n\) and \(D_n\) (closed formulas can be found in \cite{hendersonrains}):
 \begin{cor}
For \(a=1\) or \(a=2\), if  we evaluate the series \(F_{CY_{G(2,a)}}(w,-t)\) in \(w=-\frac{1}{2}\) we obtain the series 
\[X_{CY_{G(2,a)}}(w,t)=\sum_n\sum_{1\leq j\leq n} \chi({\overline Y}_{G(2,a,n)})\frac{t^n}{n!}\] 
 that computes the Euler characteristic of the real compact models \({\overline Y}_{G(2,a,n)}\).
 
 \end{cor}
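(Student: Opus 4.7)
The plan is to mirror the argument used in the corresponding corollary for the type $A$ case, adapting the bookkeeping to the fact that the relevant polytopes $P_{G(2,a,n)}$ have dimension $n-1$. Recall that by definition
\[
F_{CY_{G(2,a)}}(w,t) = \sum_{n}\sum_{1\le s\le n} C_{G(2,a,n),s}\, w^{s}\frac{t^{n}}{n!},
\]
where $C_{G(2,a,n),s}$ counts the $(s-1)$-codimensional faces of $CY_{G(2,a,n)}$. I would first use the projection map $\mathcal{P}$ recalled at the end of Section \ref{sec:spherical} (Theorem 5.2 of \cite{gaimrn}): since $\mathcal{P}$ is $2^{k+1}\to 1$ on the open parts of the boundary components of codimension $k$ of $CY_{G(2,a,n)}$, the closed cells of the induced CW decomposition of $\overline{Y}_{G(2,a,n)}$ of codimension $k=s-1$ are in number $C_{G(2,a,n),s}/2^{s}$.

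Next I would compute the Euler characteristic cell by cell. A cell of codimension $s-1$ of $\overline{Y}_{G(2,a,n)}$ has dimension $(n-1)-(s-1)=n-s$, so
\[
\chi(\overline{Y}_{G(2,a,n)})=\sum_{s=1}^{n}(-1)^{n-s}\frac{C_{G(2,a,n),s}}{2^{s}}=(-1)^{n}\sum_{s=1}^{n}C_{G(2,a,n),s}\Bigl(-\tfrac{1}{2}\Bigr)^{s}.
\]
Plugging this into the defining series for $X_{CY_{G(2,a)}}$ and comparing with
\[
F_{CY_{G(2,a)}}\bigl(-\tfrac12,\,-t\bigr)=\sum_{n}\sum_{s}C_{G(2,a,n),s}\Bigl(-\tfrac{1}{2}\Bigr)^{s}\frac{(-t)^{n}}{n!}=\sum_{n}\chi(\overline{Y}_{G(2,a,n)})\frac{t^{n}}{n!},
\]
yields the claimed identity. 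The substitution $w\mapsto -\tfrac12$ simultaneously accounts for the $2^{k+1}\to 1$ gluing (the $2^{-s}$ factor) and introduces the alternating sign in $s$, while $t\mapsto -t$ produces the correct overall sign $(-1)^{n-s}$ that measures the dimension of each cell.

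The argument is identical in the two cases $a=1$ and $a=2$: the formula for $F_{CY_{G(2,a)}}(w,t)$ already encodes all the combinatorial differences (presence or absence of the lower strong two-dimensional subspace, extra $\pm$ labels in the $D_n$ case), and the projection $\mathcal{P}$ and its $2^{k+1}\to 1$ multiplicity is the general statement of Theorem 5.2 of \cite{gaimrn}, which applies to any real reflection group. I do not anticipate a genuine obstacle: the only delicate point is consistency of conventions for dimension and codimension, and that the polytopes forming $CY_{G(2,a,n)}$ are all of the same top dimension $n-1$, so the sign $(-1)^{n-s}$ is uniform across chambers.
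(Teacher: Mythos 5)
Your proposal is correct and is essentially the paper's own argument: the paper's proof is a one-line appeal to the $2^{k+1}\to 1$ property of the projection $\mathcal{P}$ from Theorem 5.2 of \cite{gaimrn}, and you have simply written out the resulting cell count (codimension $k=s-1$ gives multiplicity $2^{s}$) and the sign bookkeeping $(-1)^{n-s}$ coming from $\dim \overline{Y}_{G(2,a,n)}=n-1$, which is exactly what the substitutions $w\mapsto -\tfrac12$ and $t\mapsto -t$ encode. No gaps; your version is just a more explicit rendering of the same reasoning.
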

\begin{proof}
This is an immediate consequence of the properties of the map \({\mathcal P}\), in particular of the  \(2^{k +1}\rightarrow 1\) glueings  of the \(k\)-codimensional boundary components of $CY_{G(2,a)}$.
\end{proof}

\addcontentsline{toc}{section}{References}
%\nocite{*}
\bibliographystyle{acm}
\bibliography{Bibliogpre} 
\end{document}